\newtheorem{Proposition}{Proposition}
  \newtheorem{Remark}{Remark}
  \newtheorem{Lemma}[Proposition]{Lemma}
  \newtheorem{Theo}[Proposition]{Theorem}
 \newtheorem{Definition}[Proposition]{Definition}
 \newtheorem{Note}[Remark]{Note}
\def\blackslug{\hbox{\hskip 1pt \vrule width 4pt height 8pt depth 1.5pt
\hskip 1pt}}
\def\qed{\quad\blackslug\lower 8.5pt\null\par}
\def\Im{\mathrm{Im}}
\title[Lifetime of shape oscillations]{The lifetime of shape oscillations of a bubble in an unbounded, inviscid and compressible fluid with surface tension}
\author{O. Costin, S. Tanveer,  and M.I. Weinstein}
\address{Department of Mathematics, The Ohio State University, 231 W 18th Ave, Columbus, OH 43210}
\address{Department of Applied Physics and Applied Mathematics, Columbia University, New York, NY 10027}
\begin{document}
\maketitle 

\begin{abstract} General perturbations of a spherical gas bubble in a compressible and inviscid fluid with surface tension were proved in \cite{SW:11}, in the linearized approximation, to decay exponentially, $\sim e^{-\Gamma t},\ \Gamma>0$, as time advances. Formal asymptotic and numerical evidence led to the conjecture that $\Gamma \approx  
\frac{A}{\epsilon}\ \frac{We}{ \epsilon^{2}}\ \exp\left(-B \frac{We}{\epsilon^2}\right)$,
 where $0<\epsilon\ll1$ is the Mach number and $A$ and $B$ are positive constants.
 In this paper, we prove this conjecture and calculate $A$ and $B$ to leading order in $\epsilon$.
 \end{abstract}
 
 \section{Introduction and outline}
The detailed non-spherical deformations of gas bubbles in a liquid is a problem of great physical interest in fundamental and applied physics; see, for example, \cite{Brenner:95,Leighton:04,SW:11} and references cited therein. We 
consider the dynamics of a gas bubble in a compressible, inviscid and irrotational fluid with surface tension.  This physical system has an equilibrium state, consisting of: a spherically symmetric
gas bubble at constant pressure inside the bubble, and a fluid at constant (lower) pressure 
 and vanishing velocity, outside the bubble. 

 In \cite{SW:11} the linearized dynamics about such equilibria was studied and it was shown that general finite energy perturbations of the spherical equilibrium damp out as time advances. In particular, the $L^\infty$ and local energy norms of the perturbation tend to zero exponentially, $\sim e^{-\Gamma t}$, as $t\to\infty$.  Formal asymptotic and numerical evidence led to the conjecture that 
 \begin{equation}\label{formal-asymp}
 \Gamma(\epsilon) \approx  \frac{A}{\epsilon}\ \frac{We}{ \epsilon^{2}}\ \exp\left(-B \frac{We}{\epsilon^2}\right)\ .
 \end{equation}
Here, $0<\epsilon\ll1$ denotes the Mach number, a dimensionless ratio of speeds, and $We$ denotes the Weber number, a dimensionless measure of surface tension. $A$ and $B$ are positive constants determined in \cite{SW:11} by computer simulation.  %
 \footnote{ The Mach number, $\epsilon$, is the ratio of the bubble-wall radial velocity to the sounds speed in the fluid, exterior to the bubble.  We shall later set $We$ equal to one and consider the asymptotics for small $\epsilon$.
}\medskip
 
 The asymptotics \eqref{formal-asymp} are in marked contrast to the decay rate of perturbations which are spherically symmetric:
 \begin{equation}\label{Gamma-radial}
 \Gamma_{\rm radial}(\epsilon) \ =\ \mathcal{O}(\epsilon)
 \end{equation}
 Thus, non-spherical deformations excite {\it shape modes} which lose their energy to the fluid and radiate sound waves very slowly.
 
{\it  Our  goal in this article is to present a proof of the above conjecture and a calculation of  $A$ and $B$ to leading order in $\epsilon$.}
 \bigskip
 
 A  systematic and detailed discussion of the physical problem, the full nonlinear compressible equations and the appropriate linearization  is presented in \cite{SW:11}. 
The dynamics of the linearized velocity potential, $\Psi(x,t),\ x\in\mathbb{R}^3$, and the bubble shape perturbation, $\beta(\Omega,t),\ \Omega\in S^2$ is governed by the wave-system:
\begin{subequations}\label{linearized-eqns}
\begin{align}
\epsilon^2\partial_t^2\Psi\ -\ \Delta\Psi\ &=\ 0,\ \ &r=|x|>1\label{wave}\\
\partial_r\Psi\ &=\ \partial_t\beta,\ \ &r=1\label{kinematic}\\
\partial_t\Psi\ &=\ 3\gamma\left(\frac{1}{2}+\frac{2}{We}\right)\ \left\langle\beta,Y_0^0\right\rangle\ -\ 
 \frac{1}{We}\left(2+\Delta_S\right)\beta,\ \ &r=1\label{LY}\\
 \left\langle\beta,Y_1^m\right\rangle\ & =\ 0\ .\ \ &|m|\le1 \label{CofM}
 \end{align}
 \end{subequations}
 \footnote{ For simplicity, we have set $Ca$, the cavitation number, appear in \cite{SW:11} equal to one.}  
 Equation \eqref{wave} follows from linearization of the  continuity and Euler momentum equations, exterior to the equilibrium bubble. Equation \eqref{kinematic} is the linearization of the kinematic boundary condition. Equation \eqref{LY} is the linearization of the Laplace-Young boundary condition, stating that the pressure jump across the gas - fluid interface is proportional to the mean curvature. Finally, \eqref{CofM} is the linearization of the statement   that the origin of coordinates (the center of the bubble) is in a frame of reference moving with the bubble center of mass. The equilibrium bubble has been normalized to have unit radius. Here, $Y_l^m=Y_l^m(\Omega),\ \Omega\in S^2$ are spherical harmonics, which satisfy $-\Delta_SY_l^m=l(l+1)Y_l^m,\ l\ge0,\ |m|\le l$,\ 
  where $\langle Y_l^m,Y_{l'}^{m'}\rangle_{L^2(S^2)}=\delta_{ll'}\delta_{mm'}$. In particular, $Y_0^0(\Omega)\equiv (4\pi)^{-1}$.
 \medskip
 
 As explained in \cite{SW:11} and now outlined,  the $L^\infty$ and local-energy decay, for $\epsilon$ small,  is controlled by a  non-selfadjoint eigenvalue problem, which arises by seeking time-harmonic solutions of \eqref{wave}-\eqref{CofM}:
 \[ \Psi(\Omega,t)\ =\ e^{- i \lambda t}\Psi_\lambda(r,\Omega),\ \ \beta(\Omega,t)\ =\ e^{-i \lambda t}\beta_\lambda(\Omega)\ , \]
  which satisfy an outgoing radiation condition as $r\to\infty$: \medskip
  
 \noindent  {\bf The Scattering Resonance Spectral Problem} 
 
 \begin{subequations}\label{eq:srp}
\begin{align}
\left(\ \Delta +\left(\epsilon \lambda\right)^2\ \right) \Psi_{\lambda} &= 0, && r>1\label{helmholtz}\\
\partial_{r}\Psi_{\lambda} &= -i\lambda \beta_{\lambda}, && r=1\label{lin-kin}\\
-i\lambda \Psi_{\lambda} &= 3\gamma\left( \frac{1}{2} + \frac{2}{We} \right) \ 
\langle \beta_{\lambda},Y_0^0 \rangle Y_0^0 -
 \frac{1}{We}\ (2+\Delta_{S})\beta_{\lambda}\ , && r=1 \label{lin-LY}\\
\Psi_{\lambda} &\phantom{=} \text{outgoing} && r\to\infty. \label{sommerfeld}
\end{align}
\end{subequations}
If $\lambda$ is such that \eqref{eq:srp} has a non-trivial solution,
then we call $\lambda$ a (deformation) scattering resonance energy, and $(\Psi_\lambda,\beta_\lambda)$ a corresponding scattering resonance mode. Due to the radiation condition \eqref{sommerfeld}, the eigenvalue problem \eqref{eq:srp} is non-selfadjoint. The eigenvalues of \eqref{eq:srp} all lie in the open lower half complex plane and for each $\epsilon>0$ are uniformly bounded away from the real axis. In particular, for any $\epsilon>0$ there is a scattering resonance eigenvalue $\lambda_\star(\epsilon)$ with $\Im\ \lambda_\star(\epsilon)<0$ and such that \underline{any} scattering resonance eigenvalue of \eqref{eq:srp}, $\lambda$, satisfies
\[ \Im\ \lambda\ \le\ \Im\ \lambda_\star(\epsilon)<0\ ;\]
see Theorem 3.2 of \cite{SW:11}. 
\medskip

The following theorem, a consequence of 
Theorem 5.1 in \cite{SW:11},  relates $\lambda_\star(\epsilon)$ to the rate of decay of perturbations about the spherical bubble equilibrium:
 
 \begin{Theo} \label{thm:linsol}
There exists $\epsilon_0>0$, such that for all $0<\epsilon\le\epsilon_0$ the following holds. 
Consider the initial boundary value problem  \eqref{linearized-eqns}. Assume the initial conditions:
\begin{itemize}
\item[(a)] $\beta(t,\Omega)\ =\ \sum_{l\ge0}\sum_{|m|\le l} \beta_l^m(0)\ Y_l^m(\Omega)$, where
\[
\| \beta(t=0) \|\ =\ \sum_{l\ge0}\sum_{|m|\le l} (1+ l)^{2+{1\over6}}\ |\beta_l^m(0)|<\infty\ \]
\item[(b)] $\Psi(t,x),\ \partial_t\Psi(t,x)\ \equiv0,\ t=0,\ |x|>1$,
\end{itemize}

Then,  there exists a unique solution $ \Psi(r,\Omega,t),\ \beta(\Omega,t)$, defined for $r>1,\ \Omega\in S^2$, which solves the the initial-boundary value problem. \medskip

Furthermore, define $\Gamma(\epsilon)$ to be the minimum distance of a scattering resonance (in the lower half plane) to the real axis. That is,
\begin{equation}
\Gamma(\epsilon)\ =\ \left|\ \Im\ \lambda_\star(\epsilon)\ \right|\label{Gamma-def}
\end{equation}
Then, the solution $ \Psi(x,t),\ \beta(\Omega,t)$ satisfies the \medskip

{\bf Decay Estimate:}\ 
\begin{align}
|\beta(\Omega,t)|\ &\le\ C\ \| \beta(t=0) \|\ e^{- \Gamma(\epsilon) t},\ \ \ \Omega\in S^2\label{beta-decay-est}\\
|\Psi(x,t)|\ &\le\ 
 \begin{cases} C\ \frac{1}{|x|} e^{-\ \Gamma(\epsilon)\ (t-\epsilon(|x|-1) }\ \| \beta(t=0) \| , \qquad  &1<|x|<1+\epsilon^{-1}t\\
0,\ \ &|x|>1+\epsilon^{-1}t
\end{cases}
\label{eq:Psi-decay-est}\end{align}

\end{Theo}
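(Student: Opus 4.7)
\textbf{Proof plan for Theorem \ref{thm:linsol}.} Since the statement is credited to Theorem 5.1 of \cite{SW:11}, my plan is to present the natural Laplace-transform route that leads to it. First, I would separate variables by expanding
\[ \beta(\Omega,t)=\sum_{l,m}\beta_l^m(t)\,Y_l^m(\Omega),\qquad \Psi(x,t)=\sum_{l,m}\psi_l^m(r,t)\,Y_l^m(\Omega), \]
which, thanks to the spherical symmetry of \eqref{wave}--\eqref{CofM} and the diagonal action of $\Delta_S$, reduces the problem to a family of one-dimensional wave problems on $r\ge 1$ coupled through the boundary at $r=1$. The radial wave equation $\epsilon^{2}\partial_t^2\psi_l-\partial_r^2(r\psi_l)/r+l(l+1)\psi_l/r^2=0$, together with the vanishing Cauchy data outside the bubble and the boundary conditions at $r=1$, is well-posed for each fixed $l$ (e.g.\ by energy arguments or direct ODE analysis after Laplace transform).

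Second, I would take the Laplace transform in $t$, writing $\hat f(\lambda)=\int_0^\infty e^{i\lambda t} f(t)\,dt$ for $\Im\lambda>0$. The transformed system for each spherical-harmonic mode is precisely the inhomogeneous version of the spectral problem \eqref{eq:srp} with source terms coming from the initial data $\beta_l^m(0)$, and the radiation condition \eqref{sommerfeld} is automatic in the upper half $\lambda$-plane. Solving the Helmholtz equation exterior to $r=1$ with outgoing behavior yields $\hat\psi_l^m(r,\lambda)=c_l^m(\lambda)\,h_l^{(1)}(\epsilon\lambda r)$, with $c_l^m(\lambda)$ determined by coupling the remaining boundary conditions \eqref{lin-kin}--\eqref{lin-LY}; this produces an explicit scalar dispersion function $D_l(\lambda;\epsilon)$ whose zeros in $\Im\lambda<0$ are the scattering resonances in the $(l,m)$ sector. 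The key quantitative input, already established in \cite{SW:11}, is that $\lambda_\star(\epsilon)$ is the resonance nearest the real axis across all $l$, and that $D_l(\lambda;\epsilon)$ admits a meromorphic continuation to a strip below the real axis with controlled lower bounds uniform in $l$ away from its zeros.

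Third, I would recover $\beta_l^m(t)$ and $\psi_l^m(r,t)$ by contour integration: $\beta_l^m(t)=\frac{1}{2\pi}\int_{\Im\lambda=\sigma>0} e^{-i\lambda t}\hat\beta_l^m(\lambda)\,d\lambda$, and analogously for $\psi$. Deforming the contour down to $\Im\lambda=-\Gamma(\epsilon)+\delta$, one picks up residues at resonances above this level (there are finitely many per mode, none below $-\Gamma(\epsilon)$ by definition), yielding an $e^{-\Gamma(\epsilon)t}$ factor, while the remaining integral is shown to be $O(e^{-(\Gamma(\epsilon)-\delta)t})$ with coefficient bounded by $|\beta_l^m(0)|$ times a polynomially growing factor in $l$. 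The spherical-harmonic weight $(1+l)^{2+1/6}$ in the norm of hypothesis (a) is precisely what is needed to make $\sum_{l,m}$ of these mode-by-mode bounds converge pointwise in $\Omega$ and give \eqref{beta-decay-est}. The bound \eqref{eq:Psi-decay-est} then follows from the outgoing-Hankel representation of $\hat\psi_l^m$, which at physical-time $t$ is supported in the causal cone $|x|-1\le t/\epsilon$ (giving the zero on $|x|>1+\epsilon^{-1}t$) and carries the retarded phase factor $e^{-\Gamma(\epsilon)(t-\epsilon(|x|-1))}$ together with the $1/|x|$ radiation decay of $h_l^{(1)}$.

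The main obstacle is the contour-deformation step, specifically obtaining uniform-in-$l$ resolvent bounds on the horizontal contour $\Im\lambda=-\Gamma(\epsilon)+\delta$. For large $l$ the Hankel functions $h_l^{(1)}(\epsilon\lambda)$ become highly oscillatory/exponential and the dispersion function $D_l(\lambda;\epsilon)$ has delicate cancellations; one must show that $|D_l|^{-1}$ grows at most polynomially in $l$ on the deformed contour and that the source-to-solution map loses no more than a fixed power of $l$, which is exactly what dictates the Sobolev-type exponent $2+\tfrac16$ on the data. Everything else is a standard combination of spherical-harmonic orthogonality, Laplace inversion, and estimates on outgoing Hankel functions.
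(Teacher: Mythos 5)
The paper does not actually prove Theorem~\ref{thm:linsol}; it is stated as a consequence of Theorem~5.1 of \cite{SW:11} and taken as an input, since the present paper's sole goal is to pin down the asymptotics of $\Gamma(\epsilon)$. So there is no in-paper proof to compare yours against. Your Laplace-transform/spherical-harmonic outline is the standard route to such decay estimates and, as far as one can tell, is indeed the mechanism behind the cited result, and you correctly identify the true crux: uniform-in-$l$ lower bounds on the dispersion function $D_l(\lambda;\epsilon)$ along the shifted inversion contour, which is what dictates the polynomial weight $(1+l)^{2+1/6}$ on the data.

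One step in your sketch is stated backwards and would not produce the advertised rate as written. You deform the inversion contour down to $\Im\lambda=-\Gamma(\epsilon)+\delta$ and say you ``pick up residues at resonances above this level (there are finitely many per mode, none below $-\Gamma(\epsilon)$).'' By the definition \eqref{Gamma-def}, every resonance satisfies $\Im\lambda\le\Im\lambda_\star=-\Gamma(\epsilon)$, so \emph{no} poles are crossed in moving to $\Im\lambda=-\Gamma(\epsilon)+\delta$; the resulting integral only gives the suboptimal rate $e^{-(\Gamma(\epsilon)-\delta)t}$, with a constant that may degenerate as $\delta\to0$. To obtain $e^{-\Gamma(\epsilon)t}$ one must either push the contour \emph{below} $\lambda_\star$, collect the residue there (which produces the $e^{-\Gamma(\epsilon)t}$ factor) and show the remaining integral decays strictly faster, or argue on a contour at height exactly $-\Gamma(\epsilon)$ with a local analysis near $\lambda_\star$. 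Also, the vanishing of $\Psi$ for $|x|>1+\epsilon^{-1}t$ is just finite propagation speed for the exterior wave equation with zero Cauchy data; it is cleaner to cite that directly rather than infer it from the Hankel representation.
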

 \bigskip

\noindent  Our main result  is the following precise asymptotic statement about $\Gamma(\epsilon)$:
 
 \begin{Theo}\label{main-thm}
 For $0<\epsilon<\epsilon_0$, 
there exist positive constants $A_\epsilon$ and $B$, such that 
 \begin{equation}
 \Gamma(\epsilon) \ = \ \frac{A_\epsilon We }{\epsilon^3}\ 
\exp\left(-\frac{B We}{\epsilon^2}\right)\ 
 \end{equation}
where $B \approx 0.26924$ and  
 where $A_\epsilon$ has asymptotic expansion
 \begin{equation}
 A_\epsilon\ = A_0 + \mathcal{O} \left (\frac{\epsilon^2}{We} \right ),
~~{\rm where} ~~A_0 \approx e^{-2.1465} 
 \label{ABeps}
 \end{equation}
 \end{Theo}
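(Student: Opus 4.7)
\medskip\noindent\textbf{Proof strategy.}

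The plan is to reduce \eqref{eq:srp} to a single transcendental equation for each spherical harmonic index $l$ and then analyze it semi-classically. Expanding $\Psi_\lambda(r,\Omega)=\sum_{l,m}c_l^m h_l^{(1)}(\epsilon\lambda r)Y_l^m(\Omega)$ and $\beta_\lambda=\sum\beta_l^m Y_l^m$, the conditions \eqref{helmholtz} and \eqref{sommerfeld} force outgoing spherical-Hankel radial parts. Inserting in \eqref{lin-kin}--\eqref{CofM}---and using that the $Y_0^0$ term in \eqref{lin-LY} couples only to $l=0$ while $l=1$ is killed by \eqref{CofM}---the problem decouples, for each $l\ge 2$, into the dispersion relation
\begin{equation}\label{dispPlan}
F(\lambda):=\epsilon\,\omega_l^2\,\frac{h_l^{(1)'}(\epsilon\lambda)}{h_l^{(1)}(\epsilon\lambda)}=-\lambda,\qquad \omega_l^2:=\frac{l(l+1)-2}{We},
\end{equation}
so $\Gamma(\epsilon)=\min_{l\ge 2}|\Im\lambda_l|$, with $\lambda_l$ the root of \eqref{dispPlan} closest to the real axis.

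Writing $h_l^{(1)}=j_l+iy_l$ and using the Wronskian identity $j_l y_l'-j_l'y_l=1/z^2$ yields the exact, strictly positive identity
\[
\Im\bigl(z\,h_l^{(1)'}(z)/h_l^{(1)}(z)\bigr)=\frac{1}{z(j_l(z)^2+y_l(z)^2)},
\]
which is exponentially small whenever $|h_l^{(1)}|$ is exponentially large, that is, in the evanescent Bessel regime $\epsilon\lambda<\nu:=l+\tfrac12$. Writing $\lambda_l=\Lambda_l-i\gamma_l$ with $\Lambda_l$ real and linearizing \eqref{dispPlan} in $\gamma_l$ gives
\[
\gamma_l=\frac{\Im F(\Lambda_l)}{\Re F'(\Lambda_l)+1}+\mathcal{O}(\gamma_l^2),
\]
with $\Lambda_l$ determined to leading order by $\Re F(\Lambda_l)=-\Lambda_l$.

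The minimizing $l$ will satisfy $\nu_l\sim We/\epsilon^2$, so $\nu$ and $z:=\epsilon\lambda$ are both large and comparable. I then apply Debye's uniform asymptotic expansion: for $z=\nu\,\mathrm{sech}\,\alpha$ with $\alpha>0$,
\[
Y_\nu(z)\sim-\sqrt{\tfrac{2}{\pi\nu\tanh\alpha}}\,e^{\nu(\alpha-\tanh\alpha)},\qquad J_\nu(z)\sim\sqrt{\tfrac{1}{2\pi\nu\tanh\alpha}}\,e^{-\nu(\alpha-\tanh\alpha)},
\]
with a matching expansion for $Y_\nu'$. Setting $\xi:=z/\nu=\mathrm{sech}\,\alpha\in(0,1)$ and using $\omega_l^2\sim\nu^2/We$ at leading order, the real part of \eqref{dispPlan} reduces to $\nu=\xi^2 We/(\epsilon^2\sqrt{1-\xi^2})$, and direct differentiation of the leading Debye formula gives $\Re F'(\Lambda_l)=1/(1-\xi^2)$ at the resonance. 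Assembling these,
\[
\gamma_l\approx\frac{\xi^3\sqrt{1-\xi^2}}{2-\xi^2}\cdot\frac{We}{\epsilon^3}\,\exp\Bigl(-B(\xi)\tfrac{We}{\epsilon^2}\Bigr),\qquad B(\xi):=\frac{2\xi^2}{\sqrt{1-\xi^2}}\Bigl(\log\tfrac{1+\sqrt{1-\xi^2}}{\xi}-\sqrt{1-\xi^2}\Bigr).
\]
Minimization over integer $l\ge 2$ reduces to leading order to \emph{maximizing} $B(\xi)$ on $(0,1)$; the critical equation $B'(\xi)=0$ becomes, with $u=\sqrt{1-\xi^2}$, the implicit relation $(1+u^2)\,\mathrm{artanh}(u)=u(1+2u^2)$, with unique interior solution $u^*\approx 0.81$, giving $\xi^*\approx 0.585$ and $B(\xi^*)\approx 0.26924$; the prefactor at $\xi^*$ furnishes $A_0$ at leading order.

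The principal technical obstacle is rigorously justifying the preceding analysis, by propagating Olver-type remainder bounds for the Debye expansions of $h_l^{(1)}$, $h_l^{(1)'}$, and their $\lambda$-derivatives, uniformly in $l$ over a window surrounding the optimum $l_\star(\epsilon)\sim\nu(\xi^*)$; these bounds control both the numerator $\Im F$ and the denominator $\Re F'+1$ in the formula for $\gamma_l$. Two supporting issues must also be settled: (i) showing that the discrete minimum over $l\in\ZZ_{\ge 2}$ is attained within $O(1)$ of the continuous maximizer $\xi^*$, so the discretization error---together with the first subleading $1/\nu$ Debye correction---is absorbed into the stated $\mathcal{O}(\epsilon^2/We)$ remainder in $A_\epsilon$; and (ii) ruling out that any other resonance beats $\gamma_{l_\star}$. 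For the last point, the $l=0$ radial channel contributes only $\mathcal{O}(\epsilon)$ decay by \eqref{Gamma-radial}, $l=1$ is excluded by \eqref{CofM}, and lower branches of \eqref{dispPlan} lie much deeper in the lower half plane by standard zero-free-region estimates for $h_l^{(1)}$.
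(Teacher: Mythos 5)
Your approach follows the same route as the paper: reduce \eqref{eq:srp} to the per-$l$ transcendental relation $z\,h_l^{(1)}(z)+Q\,h_l^{(1)\prime}(z)=0$ (your $F(\lambda)=-\lambda$ is its rearrangement), use Debye/Laplace asymptotics in the evanescent regime $z<\nu$, and optimize the exponentially small imaginary part over $l$. Your Wronskian identity $\Im\bigl(z\,h_l^{(1)\prime}/h_l^{(1)}\bigr)=1/\bigl(z(j_l^2+y_l^2)\bigr)$ is a clean way of making the exponential smallness manifest, but it is a reorganization of the same computation, not a different argument. Your formula for $B(\xi)$ and the critical equation $(1+u^2)\,\mathrm{artanh}\,u=u(1+2u^2)$ agree exactly with the paper's $g_0(\zeta)=0$ under $u=\sqrt{1-\zeta^2}$; $\xi^*$ and $B\approx0.26924$ are correct.

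There is, however, a genuine gap in your treatment of $A_0$. You claim that ``the prefactor at $\xi^*$ furnishes $A_0$ at leading order'' and consign the first subleading $1/\nu$ Debye correction to the $\mathcal{O}(\epsilon^2/We)$ remainder. This is not correct. The $\mathcal{O}(1/\nu)$ Debye correction to $y_l'/y_l$ (i.e.\ the $-\,\frac{1-2\xi^2}{2l\xi(1-\xi^2)}$ term in \eqref{16.2}) perturbs the solution of the real dispersion relation $\Re F(\Lambda_l)=-\Lambda_l$ and hence shifts $\xi$ by an amount of order $1/\nu=\mathcal{O}(\epsilon^2)$. But the exponent is $-2\nu(\alpha-\tanh\alpha)$ with $\nu\sim We/\epsilon^2$, so a $\mathcal{O}(\epsilon^2)$ shift in $\xi$ produces an $\mathcal{O}(1)$ additive shift in the exponent, i.e.\ an $\mathcal{O}(1)$ multiplicative factor in $A_0$ that the leading Debye prefactor does not see. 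Quantitatively: your prefactor $\xi^{*3}\sqrt{1-\xi^{*2}}/(2-\xi^{*2})\approx 0.0962$, whereas $A_0=e^{-2.1465}\approx0.1169$; the missing factor is exactly $\exp\!\bigl(2l_{*,m}\,\zeta_1\sqrt{\zeta^{-2}-1}\bigr)\approx e^{0.195}\approx1.21$, where $\zeta_1=\frac{1-2\zeta^2}{2\zeta(2-\zeta^2)}$ is the $\epsilon^2$-coefficient of the root of the real part (cf.\ \eqref{e2.0.zeta} and the $-2l_{m,0}\zeta_{m,2}(\zeta_{m,0}^{-2}-1)^{1/2}$ term in \eqref{eqetam}). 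Note that the $\mathcal{O}(\epsilon^2)$ shift in the \emph{critical} $l$ itself drops out by stationarity, so your remark that discretization is harmless is fine; it is the shift of $\xi$ at fixed $l$ that you must retain. To close the gap you need to carry the first subleading Debye term through the real dispersion relation, not merely bound it.
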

 \bigskip
 
 \noindent{\bf Acknowledgement:}\ O.C. and S.T. were supported in part by NSF grant DMS-1108794. M.I.W. was supported in part by NSF grant DMS-10-08855.

 \section{Proof of Theorem \ref{main-thm}}
 
 We begin by deriving a characterization of the scattering resonance energies of the eigenvalue problem \eqref{eq:srp} as the zeros of an analytic function in $\mathbb{C}$. \medskip
 
 We first note that  
outgoing solutions of the three-dimensional 
Helmholtz equation are linear combinations 
of solutions of the form $h_l^{(1)}(r) Y_l^m(\Omega),\ \ |m|\le l$, where $h_l^{(1)}$ denotes the {\it outgoing} spherical Hankel function of order $l$. Thus,   we seek solutions of  \eqref{eq:srp} of the form:
\[
\Psi_{\lambda}(r,\Omega) = a\ Y_l^m(\Omega)\ h_{l}^{(1)}\left(\epsilon \lambda r\right),\ \  \beta_{\lambda}(\Omega) = b\ Y_l^m(\Omega), \ \ \ \ \ r\ge1,\ \ \  \Omega\in S^2.\]
%
where $a$ and $b$ are constants to be determined.
This choice of  $\Psi_{\lambda}$  solves the Helmholtz equation and satisfies
the outgoing radiation condition. To impose the boundary conditions at $r=1$ we 
substitute  the expressions for $\Psi_\lambda$ and $\beta_\lambda$ into \eqref{eq:srp} and obtain the
following two linear homogeneous equations for the unknown constants $a$ and $b$:
\begin{equation}
\left(\begin{array}{cc} 
\epsilon\lambda h_l^{(1)'}(\epsilon\lambda) & i\lambda \\
- i\lambda h_l^{(1)}(\epsilon\lambda) & -(l+2)(l-1)/We 
\end{array}\right)
\left(\begin{array}{c} a \\ b\end{array}\right)\ 
=\ \left(\begin{array}{c} 0 \\ 0\end{array}\right)\ ,\ l\ge2.
\end{equation}
Setting the determinant 
equal to zero yields
\[ \lambda^2\ h_l^{(1)}(\epsilon\lambda)\ +\ 
\frac{(l+2)(l-1)}{We} \epsilon\lambda\ h_l^{(1)'}(\epsilon\lambda)\ =\ 0.\]
Finally, multiplying through by $\epsilon^2$ and defining
\begin{equation}
\label{1}
 Q(l,\epsilon) = (l+2) (l-1) \frac{\epsilon^2}{We}
 \end{equation}
 yields the  following  transcendental equation 
\begin{equation}
\label{z-eqn}
z h_l^{(1)} (z) + Q (l,\epsilon) {h_l^{(1)}}^\prime (z)\ = 0  , \ \  z=\epsilon\lambda\ne0.
\end{equation} 
\medskip
Here $h_l^{(1)}$ denotes
spherical Hankel function of the first kind of order $l$.

\begin{Remark}\label{RemZ}{\rm
We will analyze 
the roots $z$ of (\ref{z-eqn}) for $We=1$ since the formula for the roots $z$ for
$We \ne 1$ are obtained from those with $We=1$ by replacing 
$\epsilon^2 $ by $\frac{\epsilon^2}{We}$.
Furthermore, since $\lambda = 
\frac{z}{\epsilon}$, proving Theorem \ref{main-thm} is equivalent to showing
that the roots of (\ref{z-eqn}) for $We =1$ satisfy the property 
\begin{equation}
\label{zresult}
\Gamma (\epsilon) =
\inf_{l \ge 2} \left \{ -\Im ~z \right \} = \frac{A_\epsilon}{\epsilon^2}
\exp \left [ -\frac{B}{\epsilon^2} \right ] 
\end{equation}
for $ 0 < \epsilon \le \epsilon_0$ sufficiently small.}
\end{Remark}

In the
following, we consider the roots in different regimes in
$l$ relative to $\epsilon$.

\begin{Definition}
  $f =O(g)$ means that for any $\epsilon$ sufficiently small, there
  exists a constant $C$ independent of $\epsilon$ so that $ |f| \le C
  |g|$. We write $f = O_s (g)$, {\it i.e. $f$ is  strictly} of order $g$, if $\Big(f =
  O(g)$ {\em and} $g=O(f)\Big)$.
  We write $f\ll g$ if $f=o(g)$, that is $f/g\to0$, as $\epsilon\to0$ and $f\gg g$ if  $|f/g|\to\infty$ as $\epsilon\to0$.
\end{Definition}
\medskip

\noindent
{\bf Outline of the proof:} In section \ref{derivation-of-a-sys} equation \eqref{z-eqn} is rewritten as a coupled set of equations for the real and imaginary parts of $z=x+iy$. In section \ref{location-of-roots} we show, via Lemma \ref{Lem1} and Lemma \ref{lem2}, that the scattering resonance of minimal imaginary part does not occur for $l=\mathcal{O}(1)$ or for $l\gg\epsilon^{-2}$. In section \ref{l-ll-eps-2}  we show that the scattering resonance of minimal imaginary part does not occur for $1\ll l\ll\epsilon^{-2}$. Therefore, the scattering resonance of interest must satisfy $l=\mathcal{O}_s(\epsilon^{-2})$. The detailed analysis of this regime and the rigorous  approximation of \eqref{Gamma-def} is in section \ref{the-region}. Section \ref{asymptotics} is an appendix containing many of the asymptotic forms of special functions.

\section{Derivation of a system of equation for $x$, $y$, where $z=x+iy$}\label{derivation-of-a-sys}

 It turns out that for large $l$ some of the roots
  $z=x+iy$ of (\ref{z-eqn})  have exponentially small $y$ (with also
  $y/x$ is exponentially small) and therefore the asymptotic
  analysis is more delicate.  We next re-express \eqref{z-eqn} as an equivalent system of equations obtained by setting its real and imaginary parts equal to zero.
We note that 
\begin{equation}
\label{2}
h_l^{(1)} (z) = j_l (z) + i y_l (z),
~~{\rm where} ~j_l (z) = \sqrt{\frac{\pi}{2 z}} J_{l+1/2} (z) 
~~,~~
y_l (z) = \sqrt{\frac{\pi}{2 z}} Y_{l+1/2} (z), 
\end{equation}
where $J_{l+1/2} (z)$ and $Y_{l+1/2} (z)$ are 
Bessel functions of order $l+1/2$. 
Substituting  into (\ref{z-eqn}) leads to
\begin{equation}
\label{3} 
(x+iy) j_l (x+iy) + i (x+iy) y_l (x+iy) + Q j_l^\prime (x+iy) + i Q y_l^\prime (x+iy) = 0     
\end{equation}
Defining the real valued functions
\begin{multline}
\label{3.1}
A_1 (x, y) = \frac{1}{2} \left [ y_l (x+iy) + y_l (x-iy) \right ]~~,   
~~A_2 (x, y) = \frac{1}{2iy} \left [ y_l (x+iy) - y_l (x-iy) \right ] \\
A_3 (x, y) = \frac{1}{2} [ y^\prime_l (x+iy) + y_l^\prime (x-iy) ]~~, 
~~A_4 (x, y) = \frac{1}{2iy} [ y_l^\prime (x+iy) - y_l^\prime (x-iy) ], 
\end{multline}
\begin{multline}
\label{3.2}
B_1 (x, y)  
= \frac{1}{2} \left [ j_l (x+iy) + j_l (x-iy) \right ]~~,   
~~B_2 (x, y) = 
\frac{1}{2iy} \left [ j_l (x+iy) - j_l (x-iy) \right ] \\
B_3 (x, y) = 
\frac{1}{2} [ j^\prime_l (x+iy) + j_l^\prime (x-iy) ]~~, 
~~B_4 (x, y)  
= \frac{1}{2iy} [ j_l^\prime (x+iy) - j_l^\prime (x-iy) ],  
\end{multline}
and separating out the imaginary and real parts of (\ref{3}), we get
\begin{equation}
\label{7}
x A_1 = - Q A_3 - y \left ( x B_2 + B_1 + Q B_4 - y A_2 \right )
\end{equation}
\begin{equation}
\label{6} 
y \left ( A_1 + x A_2 + Q A_4 \right ) = 
\left ( x B_1 + Q B_3 - y^2 B_2 \right ) 
\end{equation}
We rewrite (\ref{7}), (\ref{6}) in the form
\begin{equation}
\label{6.1}
x = - \frac{Q A_3}{A_1} - \frac{ y (x B_2 + B_1 + Q B_4 - y A_2)}{A_1} 
\end{equation} 
\begin{equation}
\label{7.1}
y = \frac{x B_1 + Q B_3 - y^2 B_2}{A_1 + x A_2 + Q A_4} 
\end{equation}

\section{Location of the roots $z$ of (\ref{z-eqn})} \label{location-of-roots}

As will be seen,  
the roots with smallest  $|\Im ~z |$ occur
for  $l=O (\epsilon^{-2} ), z=O (l)$; for them, it turns out that $\Im z$ is exponentially small in $\epsilon$. 
First, we show that roots $z$ in other regimes have larger 
$|\Im ~z|$.
\begin{Lemma}
\label{Lem1}
If $l = O(1)$ as $\epsilon \rightarrow 0$, then any root of (\ref{z-eqn}) 
in the 
sector $\arg z \in  \left ( -\pi, \pi \right )$
satisfies $z^2  \sim (l+1) Q $, and $y=\Im z = O(\epsilon^{2l+2})$ .
\end{Lemma}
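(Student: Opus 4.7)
The idea is to exploit the fact that for $l=O(1)$ the spherical Bessel functions $j_l$, $y_l$ are elementary functions (finite sums in $\sin z$, $\cos z$ and negative powers of $z$), so every expansion and remainder is rigorous and explicit. First decompose the equation by writing $h_l^{(1)} = j_l + i y_l$, which turns \eqref{z-eqn} into $F(z):=H(z)+iG(z)=0$, where
\[
H(z) = zj_l(z) + Qj_l'(z),\qquad G(z) = zy_l(z) + Qy_l'(z)
\]
have real coefficients in their Laurent expansions about $z=0$. Using $j_l(z)=\frac{z^l}{(2l+1)!!}(1+O(z^2))$ and $y_l(z)=-\frac{(2l-1)!!}{z^{l+1}}(1+O(z^2))$, one computes for $z=O(\epsilon)$
\[
G(z) = \frac{(2l-1)!!}{z^{l+2}}\bigl((l+1)Q-z^2\bigr)\bigl(1+O(z^2)\bigr), \quad H(z) = \frac{z^{l-1}}{(2l+1)!!}\bigl(z^2+lQ\bigr)\bigl(1+O(z^2)\bigr),
\]
and the leading real zeros of $G$ are $z_0=\pm\sqrt{(l+1)Q}\,(1+O(\epsilon^2))=O(\epsilon)$.

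I would next localize roots via Rouch\'e. On the circle $|z-z_0|=\epsilon^{2}$ the bound $|iG(z)|\gtrsim \epsilon^{-l+1}\gg \epsilon^{l+1}\gtrsim |H(z)|$ holds, so $F$ and $iG$ have the same number of zeros inside, namely one, matching the simple zero of $iG$. Outside such small disks (but still for $|z|=o(1)$) the $O(z^{-(l+2)})$ pole in $G$ dominates, so no small root of $F$ is missed. The function $h_l^{(1)}$ does possess finitely many $O(1)$ zeros in the sector when $l\ge 2$, each generating a root of $F$ at distance $O(\epsilon^2)$ with $|\Im z|=O(1)$, but these are plainly incomparable with the $O(\epsilon^{2l+2})$ scale of the lemma and irrelevant to the infimum $\inf_{l\ge 2}|\Im z|$ in \eqref{zresult}.

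For the sharp estimate on $\Im z$, write $z=z_0+\delta$ and perform one Newton step. Since $z_0\in\mathbb R$ and $H$, $G$ have real coefficients, all of $H(z_0),H'(z_0),G'(z_0)$ are real, and
\[
\delta = -\frac{F(z_0)}{F'(z_0)}(1+o(1)) = -\frac{H(z_0)}{H'(z_0)+iG'(z_0)}(1+o(1)),
\]
so $\Im \delta = \dfrac{H(z_0)G'(z_0)}{H'(z_0)^2+G'(z_0)^2}(1+o(1))$. Inserting the sizes $H(z_0)=O(\epsilon^{l+1})$, $H'(z_0)=O(\epsilon^{l})$, $G'(z_0)=O(\epsilon^{-(l+1)})$ (all read off from the leading expansions at $z_0=O(\epsilon)$), the denominator is dominated by $G'(z_0)^2=O(\epsilon^{-2l-2})$, whence $\Im \delta=O(\epsilon^{l+1}\cdot\epsilon^{-(l+1)}\cdot\epsilon^{2l+2})=O(\epsilon^{2l+2})$. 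Consequently $y=\Im z=O(\epsilon^{2l+2})$, and $z^2=z_0^2+O(z_0\delta)=(l+1)Q\bigl(1+O(\epsilon)\bigr)\sim (l+1)Q$.

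The main obstacle is not the Newton step---which, given the elementary nature of $j_l$, $y_l$ at bounded $l$, reduces to the bookkeeping above---but the global localization inside the sector $\arg z\in(-\pi,\pi)$: one must verify carefully that the Rouch\'e comparison survives on the right annulus, and that the spurious $O(1)$ roots near the zeros of $h_l^{(1)}$ (for $l\ge 2$) are correctly quarantined from the statement. Once these issues are addressed, the remainder is a direct perturbative calculation anchored at the pair $\pm\sqrt{(l+1)Q}$.
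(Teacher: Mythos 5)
Your proof is correct, and it takes a genuinely different route from the paper's. The paper rewrites \eqref{z-eqn} as $z = -Q\,{h_l^{(1)}}'(z)/h_l^{(1)}(z)$, inserts the small-$z$ expansions of $j_l, y_l$, and iterates the resulting relation to obtain the real series \eqref{0.1} for $z^2$; the imaginary part $y$ is then read off from the pre-derived system \eqref{7.1} together with small-$x$ Bessel asymptotics. You instead split the equation directly into $F=H+iG$ with $H,G$ real-coefficient functions, identify the real zero $z_0$ of $G$ from the explicit elementary Laurent form, localize by Rouch\'e, and extract $\Im\delta$ from a single Newton step. Your size bookkeeping is correct: with $z_0^2 = (l+1)Q(1+O(\epsilon^2))$ one indeed has $H(z_0)=O(\epsilon^{l+1})$, $H'(z_0)=O(\epsilon^{l})$, $G'(z_0)=O(\epsilon^{-(l+1)})$, so the denominator is dominated by $G'(z_0)^2$ and $\Im\delta=O(\epsilon^{2l+2})$; the second-order Newton correction is $O(\epsilon^{4l+3})$, which is negligible, so the one-step estimate is rigorous. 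Your approach has the advantage of being self-contained (it does not lean on the system \eqref{6.1}--\eqref{7.1} built in \S\ref{derivation-of-a-sys}) and makes the localization argument explicit rather than implicit.

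One point you raise is worth flagging: the $O(1)$ roots of \eqref{z-eqn} sitting at distance $O(\epsilon^2)$ from the zeros of $h_l^{(1)}$ genuinely exist for $l\ge 2$ and have $|\Im z|=O(1)$. The paper's one-line dismissal of the $z=O_s(1)$ regime (``implies $z=O(\epsilon^2)$, a contradiction'') tacitly assumes ${h_l^{(1)}}'(z)/h_l^{(1)}(z)=O(1)$, which fails near those zeros, so as literally stated the lemma's ``any root'' conclusion is too strong. You correctly note that these roots are far from the real axis and therefore irrelevant to the infimum in \eqref{zresult}; the same reading should be applied to the paper's version. This is a legitimate observation, not a defect in your argument.
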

\noindent See Theorem 6.2 of \cite{SW:11} for detailed asymptotics. 
\begin{proof}  
Assume first that we had $z \gg 1$. Since in this regime $\frac{{h^{(1)}}^\prime (z)}{h_l (z)} \sim i $ (see 
(\ref{14.5.1.1}) in the Appendix), 
it follows that 
$z \sim -i Q = O(\epsilon^2)$, a contradiction. Similarly,  $z=O_s (1)$
implies  $z = O(\epsilon^2)$, a contradiction again.
Thus $z \ll 1$. 
Using the well-known expansions of of $h^{(1)}_l (z),{h_l^{(1)}}' (z) $  
for small $z$ 
((\ref{A.1}) and (\ref{A.2}) in the Appendix) we get
\begin{align}
\label{0.0}
z \sim -Q \frac{ {h_l^{(1)}}^\prime (z) }{ h_l^{(1)} (z) } &= 
-Q \frac{y_l^\prime (z)}{y_l (z)} 
\left ( 1 - \frac{i j_l^\prime (z)}{y_l^\prime (z)} \right ) 
\left ( 1 - \frac{i j_l (z)}{y_l (z)} \right )^{-1}   \\
&=
\frac{(l+1) Q}{z} \left [ 1- 
\frac{z^2}{(2l-1) (l+1)} - \frac{2 z^4}{(l+1) (2l-3)(2l-3)^2 }   
+ O ({z^6}/{l^6}) \right ] \nonumber\\
&\qquad\qquad\qquad \times\ \left [ 1 + O 
\left (\frac{z}{l} \right )^{2l+1} \right ],  
\end{align}
implying by iteration 
\begin{equation}
\label{0.1}
z^2 = (l+1) Q \left [ 1 - \frac{Q}{2l-1} + 
\frac{Q^2 (l-4)}{(2l-3)(2l-1)^2}+ O \left (Q^4 l^{-4}, Q^{l+1/2} 
l^{-l-1/2} 
\right ) 
\right ],   
\end{equation}   
which is real and positive up to and including $O ( Q^l )$
Therefore, $x$  can  be computed within  errors of 
$O (Q^l) = O(\epsilon^{2l})$ 
from the series
(\ref{0.1}). 
Using (\ref{7.1}) and the small $x$ expansion of Bessel functions 
(see Appendix), the
result follows from straightforward calculations.
\end{proof}
\begin{Remark}{\rm 
\label{remz}
The expression (\ref{0.1}) holds also in the regime $1 \ll l  \ll \epsilon^{-2}$ 
}
\end{Remark}
\begin{Lemma} 
\label{lem2}
If $l  \gg \epsilon^{-2} \gg 1$, then any root of (\ref{z-eqn}) in the 
sector $\arg z \in (-\pi, \pi )$ satisfies  $z \sim ~-i Q $, 
implying $-\Im z = O (l^2 \epsilon^2) \gg \epsilon^{-2} $. 
\end{Lemma}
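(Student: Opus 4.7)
\noindent\textbf{Proof plan for Lemma \ref{lem2}.} Rewrite \eqref{z-eqn} in the form
\begin{equation*}
z \;=\; -\,Q\;\frac{{h_l^{(1)}}'(z)}{h_l^{(1)}(z)}\,.
\end{equation*}
Under the hypothesis $l\gg\epsilon^{-2}\gg 1$, one has $Q=(l+2)(l-1)\epsilon^2/We\sim l^2\epsilon^2$, so in particular $Q\gg l\gg 1$. The strategy is to rule out roots with $|z|\lesssim l$, and then apply the large-$|z|/l$ asymptotics of the Hankel logarithmic derivative to the remaining regime to force $z\sim -iQ$.

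I would first eliminate $|z|\ll l$ using the same small-argument expansions as in Lemma \ref{Lem1}: inserting \eqref{A.1}, \eqref{A.2} and iterating gives $z^2\sim (l+1)Q$, hence $|z|^2\sim lQ\sim l^3\epsilon^2\gg l^2$, contradicting $|z|\ll l$. For the transition regime $|z|=O_s(l)$, I would invoke Olver's uniform Airy-type expansion of $H_\nu^{(1)}$ with $\nu=l+\tfrac12$, which implies that $|{h_l^{(1)}}'(z)/h_l^{(1)}(z)|$ is bounded by a constant away from the turning point $|z|=l$ and by $O(l^{1/3})$ near it. Consequently $|Q\,{h_l^{(1)}}'(z)/h_l^{(1)}(z)|=O(l^2\epsilon^2)=(l\epsilon^2)\cdot O(l)\gg |z|$, so the equation cannot balance with $|z|=O_s(l)$.

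In the remaining regime $|z|\gg l$, the Debye uniform asymptotic for $H_{l+1/2}^{(1)}$ yields
\begin{equation*}
\frac{{h_l^{(1)}}'(z)}{h_l^{(1)}(z)} \;=\; i\sqrt{1-\frac{(l+\tfrac12)^2}{z^2}}\;-\;\frac{1}{2z}\;+\;O\!\left(\frac{l^4}{|z|^4}+\frac{1}{|z|^2}\right),
\end{equation*}
uniformly in $l$ for $|z|/l$ bounded away from $1$. Substituting and squaring produces $z^4+Q^2 z^2-Q^2(l+\tfrac12)^2\sim 0$, whose two $z^2$-roots are $z^2\sim (l+\tfrac12)^2$ (rejected, since $|z|\gg l$) and $z^2\sim -Q^2$. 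The sign of the square root is pinned down by requiring the $e^{iz}$ branch of $h_l^{(1)}$ to be the physically correct outgoing branch that is consistent with the chosen Debye sheet, which selects $z\sim -iQ$. Therefore $-\Im z\sim Q\sim l^2\epsilon^2$, and since $l\epsilon^2\gg 1$ this is $\gg\epsilon^{-2}$, as claimed.

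The principal obstacle is the transition regime $|z|\sim l$, where neither the small- nor the large-argument expansion applies directly, and one must invoke Olver's uniform Airy-type asymptotics (or, alternatively, a Wronskian/Sturm-type bound derived from the Bessel ODE) to control $|{h_l^{(1)}}'(z)/h_l^{(1)}(z)|$. The saving feature is that the gap $Q/l=l\epsilon^2\gg 1$ is large, so only a rough polynomial-in-$l$ bound on the ratio is needed to rule out roots with $|z|=O_s(l)$; this makes the transition estimates relatively cheap, and the real work is the application of the Debye expansion in the regime $|z|\gg l$.
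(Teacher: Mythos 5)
Your decomposition into three regimes ($|z|\ll l$, $|z|=O_s(l)$, $|z|\gg l$) and the asymptotic tools you invoke match the paper's proof. The small-$z$ elimination is identical, and your Debye computation in the regime $|z|\gg l$ reaches the same conclusion $z\sim -iQ$, though it is somewhat more elaborate than necessary: the paper simply reads off $h_l^{(1)'}(z)/h_l^{(1)}(z)\sim i$ from \eqref{14.5.1.1}, from which $z\sim -iQ$ is immediate, without passing through the quartic in $z^2$ or selecting a Debye sheet.

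There is, however, a genuine logical gap in your treatment of the transition regime $|z|=O_s(l)$. You establish an \emph{upper} bound on the logarithmic derivative (bounded by a constant away from the turning point, $O(l^{1/3})$ near it) and then assert $|Q\,h_l^{(1)'}/h_l^{(1)}|=O(l^2\epsilon^2)\gg |z|$. An upper bound cannot yield a $\gg$ statement: to rule out the balance $|z|=|Q\,h_l^{(1)'}/h_l^{(1)}|$ at $|z|=O_s(l)$ you need a \emph{lower} bound on $|h_l^{(1)'}/h_l^{(1)}|$. Moreover, your estimate near the turning point is wrong in direction: there $|h_l^{(1)'}/h_l^{(1)}|=O_s(l^{-1/3})$, not $O(l^{1/3})$, because $d\zeta/dz\sim l^{-1/3}$ in the Airy variable $\zeta$ of \eqref{16.5}--\eqref{16.6}. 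Consequently $|Q\,h_l^{(1)'}/h_l^{(1)}|$ can be as small as $O_s(l^{5/3}\epsilon^2)$, which balances $|z|=O_s(l)$ precisely when $l=O_s(\epsilon^{-3})$ — a case allowed by the hypothesis $l\gg\epsilon^{-2}$, so no contradiction arises there. The paper's own phrasing (``$z/\sqrt{l(l+1)}=O_s(Q/l)$'') glosses over the same turning-point subtlety; what saves the lemma's operative conclusion $-\Im z\gg\epsilon^{-2}$ is that near the turning point the ratio $(\mathrm{Ai}'-i\mathrm{Bi}')/(\mathrm{Ai}-i\mathrm{Bi})$ has real and imaginary parts of comparable size, so any root there still has $|\Im z|=O_s(|z|)=O_s(l)\gg\epsilon^{-2}$, even if $z\sim -iQ$ does not literally hold. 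Your write-up as stated does not close this case; you should either supply the lower-bound reasoning in the WKB sub-regime and handle the turning-point neighbourhood separately via the imaginary part of the Airy ratio, or weaken the intermediate claim to the conclusion that actually matters, namely $-\Im z\gg\epsilon^{-2}$.
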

\begin{proof}
 We first claim that there are no roots $z \ll l$. Indeed, otherwise the expansion of
  $h_l^{(1)} (z) $ and ${h_l^{(1)}}^\prime (z) $ in this regime (see
  (\ref{A.1}), (\ref{A.2}) in the Appendix) would lead to a
  contradiction: $z=O (\sqrt{lQ}) = O (l^{3/2} \epsilon ) \gg l $.  On
  the other hand, the existence of roots $z = O_s (l)$ implies, using
  the asymptotics of $h_l^{(1)}$ and $(h_l^{(1)})'$ (see \S 7.3- \S
  7.5 in the Appendix for results in different ranges of
  $\frac{z}{\sqrt{l (l+1)}}$) in (\ref{z-eqn}),  a
  contradiction: $\frac{z}{\sqrt{l (l+1)}} = O_s \left ( \frac{Q}{l}
  \right ) = O_s (l \epsilon^{2} ) \gg 1$.  Thus, $|z| \gg l$. Using
  the asymptotic behavior of $h^{(1)}_l (z)$ and its derivative in
  this regime leads (see section \ref{7.2}) to $z \sim -i Q$.
\end{proof}

\section{Analysis of 
the case $ 1\ll l\ll \epsilon^{-2} $}\label{l-ll-eps-2}

\begin{Lemma}
\label{lem3}
If $1 \ll l \ll \epsilon^{-2}$, then any root of (\ref{z-eqn}) 
is given by (\ref{0.1}), implying that
$z = O_s (l^{3/2} \epsilon) \ll l \ll \epsilon^{-2}$.  
\end{Lemma}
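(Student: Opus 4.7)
The strategy mirrors the proof of Lemma \ref{Lem1}, with additional care required because $l$ is now growing with $\epsilon$. The plan is to first rule out regimes where $|z|$ is comparable to or larger than $l$, then to apply the small-argument expansions of $h_l^{(1)}(z)$ (Appendix, (\ref{A.1}), (\ref{A.2})) in the remaining regime $|z|\ll l$, and finally to iterate the resulting implicit equation to recover (\ref{0.1}).

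First, I would eliminate the regime $|z|\gg l$: there the large-argument asymptotic (\ref{14.5.1.1}) gives ${h_l^{(1)}}'(z)/h_l^{(1)}(z)\sim i$, so (\ref{z-eqn}) forces $z\sim -iQ$. But $Q = O_s(l^2\epsilon^2)$, and the assumption $l\ll\epsilon^{-2}$ yields $|Q|\ll l$, contradicting $|z|\gg l$. To exclude $z=O_s(l)$, I would invoke the Olver/Debye uniform asymptotics for $h_l^{(1)}$ in the transition regime (Appendix sections 7.3--7.5), just as in the proof of Lemma \ref{lem2}: one has ${h_l^{(1)}}'/h_l^{(1)} = O_s(1/\sqrt{l(l+1)})$ in the relevant range, so (\ref{z-eqn}) forces $z=O_s(Q/l)=O_s(l\epsilon^2)\ll 1\ll l$, again contradicting $|z|=O_s(l)$.

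Having established $|z|\ll l$, I would apply the small-argument asymptotics of the spherical Bessel and Hankel functions from the Appendix. Writing
\[
-\frac{z}{Q} \;=\; \frac{{h_l^{(1)}}'(z)}{h_l^{(1)}(z)} \;=\; \frac{y_l'(z)}{y_l(z)}\cdot\frac{1 - i\,j_l'(z)/y_l'(z)}{1 - i\,j_l(z)/y_l(z)},
\]
the factor $y_l'(z)/y_l(z) = -(l+1)/z\,[1-z^2/((2l-1)(l+1)) + \cdots]$ gives precisely the expansion used in Lemma \ref{Lem1}, while the corrective ratio $j_l/y_l = O((z/l)^{2l+1})$ is transcendentally small in $l$. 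A single iteration yields $z^2=(l+1)Q\,[1+O(Q/l)]$, and continued iteration produces (\ref{0.1}). The crucial quantitative input is $Q/l = O(l\epsilon^2)\to 0$, which is exactly the hypothesis $l\ll\epsilon^{-2}$; this is what makes the series converge with small remainder.

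From $z^2\sim(l+1)Q\sim l^3\epsilon^2$ one reads off $z=O_s(l^{3/2}\epsilon)$, and the chain $l^{3/2}\epsilon = l\sqrt{l\epsilon^2}\ll l\ll\epsilon^{-2}$ is internally consistent. The main technical obstacle, as in Lemma \ref{lem2}, is the exclusion of the transition regime $|z|=O_s(l)$: there neither the small- nor the large-argument expansions are valid, and one must invoke uniform Debye-type asymptotics. Everything else is an iteration essentially identical to that in Lemma \ref{Lem1}, but with the extra bookkeeping that the error terms (which were absolute constants for $l=O(1)$) now carry explicit $l$-dependence that must be tracked and shown to vanish under $l\epsilon^2\to 0$.
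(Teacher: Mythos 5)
Your proposal follows the paper's proof essentially verbatim: rule out $|z|\gg l$ via the large-argument asymptotic $h'/h\sim i$, rule out $z=O_s(l)$ via the Debye/transition-regime asymptotics, then apply the small-argument expansions from Lemma~\ref{Lem1} in the surviving regime $|z|\ll l$ to iterate to \eqref{0.1}. The only slip is in the $z=O_s(l)$ exclusion: the claim that ${h_l^{(1)}}'/h_l^{(1)}=O_s\bigl(1/\sqrt{l(l+1)}\bigr)$ in the transition regime is incorrect. From \eqref{16.2}--\eqref{16.3}, with $\xi=z/\sqrt{l(l+1)}$ bounded away from $0$ and $1$, one has ${h_l^{(1)}}'/h_l^{(1)}\approx y_l'/y_l=-\sqrt{\xi^{-2}-1}+O(1/l)=O_s(1)$, not $O_s(1/l)$. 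Consequently the forced scaling is $z=O_s(Q)=O_s(l^2\epsilon^2)$ (equivalently, as the paper puts it, $z/\sqrt{l(l+1)}=O_s(Q/l)$), not $z=O_s(Q/l)$. The contradiction with $z=O_s(l)$ survives either way, since $l^2\epsilon^2\ll l$ under the hypothesis $l\ll\epsilon^{-2}$, so the argument is not broken; but the intermediate estimate should be corrected.
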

\begin{proof}
Using the asymptotics  \eqref{14.5.1.1}  of $h^{(1)}_l (z)$ 
and $(h^{(1)}_l (z))'$ 
in (\ref{z-eqn}), the assumption $z \gg l$  leads to a contradiction:
$z \sim - i Q = O(l^2 \epsilon^2 ) \ll l$.
Assuming  $z=O_s (l)$ and using  the asymptotics
of $h_l^{(1)}$ and $(h_l^{(1)})'$  (Appendix,  \S 7.3- \S 7.5)
in 
(\ref{z-eqn}), also ends up in a contradiction:
$\frac{z}{\sqrt{l (l+1)}} = O_s \left ( \frac{Q}{l} \right ) = 
O_s (l \epsilon^2 ) \ll 1$.
In the regime $z=o(l)$, 
exploiting the asymptotics of $h_l^{(1)} (z)$, we obtain (\ref{0.0}) and
therefore (\ref{0.1}).
\end{proof}

By Lemma \ref{lem3}, any root of (\ref{z-eqn}) in this case satisfies $z
=o(l)$.  From (\ref{0.1}) (see Remark \ref{remz}), it follows that
the roots are close to the real axis. 
In this section, we estimate $y = \Im ~z$ in this regime.

\begin{Lemma}
\label{lem4}
Any root $z=x+iy$ of (\ref{z-eqn}) in the regime $1 \ll l \ll \epsilon^{-2}$ 
satisfies
\begin{equation} 
\label{1.2} 
y 
\sim -\frac{\sqrt{Q (l+1)}}{2e} 
\left ( \frac{e Q^{1/2} (l+1)^{1/2}}{2l} \right )^{2l+1} 
\left [ 1 - \frac{Q}{(2l-1)} + O (Q^2/l^2) \right ]^{l+1/2}
\end{equation}
\end{Lemma}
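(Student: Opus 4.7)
\emph{Proof proposal.} The plan is to compute $y = \Im z$ as a small perturbation of a nearby real root of the real-part equation. By Lemma \ref{lem3} and formula (\ref{0.1}), there exists a real $x_0$ with $x_0 y_l(x_0) + Q y_l'(x_0) = 0$ and $x_0^2 = (l+1)Q[1 - Q/(2l-1) + O(Q^2/l^2)]$. Under the self-consistent assumption that $y$ is exponentially small in $l$, the functions $A_i$, $B_i$ in (\ref{3.1})--(\ref{3.2}) pass to their $y=0$ limits $y_l, y_l', y_l', y_l''$ and $j_l, j_l', j_l', j_l''$, so equation (\ref{7.1}) evaluated at $x=x_0$ reduces to
\[
y \sim \frac{x_0 j_l(x_0) + Q j_l'(x_0)}{y_l(x_0) + x_0 y_l'(x_0) + Q y_l''(x_0)}.
\]

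I would simplify the numerator via the spherical Bessel Wronskian $j_l y_l' - j_l' y_l = 1/z^2$ together with $Q y_l' = -x_0 y_l$, obtaining $1/(x_0 y_l'(x_0))$. For the denominator I would substitute the spherical Bessel equation for $y_l''$ and then $y_l'(x_0) = -x_0 y_l(x_0)/Q$; this yields $y_l(x_0)\bigl[3 - Q - x_0^2/Q + Q l(l+1)/x_0^2\bigr]$, and inserting (\ref{0.1}) makes the $-Q$, $-x_0^2/Q$, and $Ql(l+1)/x_0^2$ contributions combine to $2 + O(Q/l)$. Combining the two,
\[
y \sim -\frac{Q}{2 x_0^2 [y_l(x_0)]^2}\bigl(1 + O(Q/l)\bigr).
\]

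The final step inserts the small-$z$ asymptotic $y_l(z) \sim -(2l-1)!!/z^{l+1}$ (Section \ref{asymptotics}) and Stirling's formula $(2l-1)!! \sim \sqrt{2}\,(2l/e)^l$. Expanding $x_0^{2l}$ via (\ref{0.1}) produces a factor $[1 - Q/(2l-1) + O(Q^2/l^2)]^l$, and regrouping one additional power of $x_0 \sim \sqrt{(l+1)Q}\,[1 - Q/(2l-1)]^{1/2}$ into the prefactor $\sqrt{(l+1)Q}/(2e)$ promotes the exponent $l$ to the claimed $l+1/2$, reassembling exactly (\ref{1.2}).

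The main technical obstacle is the denominator cancellation described above: a priori the bracket $3 - Q - x_0^2/Q + Ql(l+1)/x_0^2$ contains order-one (in $Q$) terms, and only the \emph{second-order} correction $-Q/(2l-1)$ in (\ref{0.1}) is sharp enough to make them conspire into $2 + O(Q/l)$; without it the leading constant in (\ref{1.2}) would be off by an order-one factor. A consistency check afterwards is that the resulting $|y|$ is indeed exponentially small in $l$: in the regime $l \ll \epsilon^{-2}$ the quantity $eQ^{1/2}(l+1)^{1/2}/(2l) \approx e\sqrt{Q/(4l)}$ stays strictly below $1$, so raising it to the power $2l+1$ produces exponential decay in $l$, validating the initial $y=0$ ansatz.
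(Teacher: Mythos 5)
Your proposal is correct, and it arrives at (\ref{1.2}) by a somewhat different algebraic route than the paper. Both start from (\ref{7.1}) with the $A_i$, $B_i$ sent to their $y=0$ limits and $x$ replaced by the real root $x_0$ of $xy_l(x)+Qy_l'(x)=0$. From there the paper works directly with the ratio form: it uses the small-argument asymptotics (\ref{A.5})--(\ref{A.6}) to show that the numerator is $\approx 2x_0 j_l(x_0)$ and the denominator is $\approx 2y_l(x_0)$ (this is the content of (\ref{1.3.1})), so that $y\approx x_0 j_l(x_0)/y_l(x_0)$, and then substitutes (\ref{A.7}) for $j_l/y_l$. You instead collapse the numerator \emph{exactly} to $1/(x_0y_l'(x_0))$ via the Wronskian and the real-part equation $Qy_l'=-x_0y_l$, and collapse the denominator via the spherical Bessel ODE plus the same relation to $y_l(x_0)\bigl[3-Q-x_0^2/Q+Ql(l+1)/x_0^2\bigr]=2y_l(x_0)\bigl(1+O(Q/l)\bigr)$; combining gives $y\approx -Q/\bigl(2x_0^2y_l(x_0)^2\bigr)$, and only then do you need a single asymptotic input, (\ref{A.2}) for $y_l$. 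Your version is a bit tighter in that it trades two asymptotic cancellations for one exact identity plus one cancellation, and it isolates cleanly that the only genuinely delicate step is the bracket $3-Q-x_0^2/Q+Ql(l+1)/x_0^2\to 2$, which, as you correctly observe, needs the second-order term $-Q/(2l-1)$ in (\ref{0.1}). The final "regrouping a power of $x_0$" step introduces a harmless $(l+1)/l=1+O(1/l)$ factor, which is within the tolerance of the $\sim$ in (\ref{1.2}), so the reassembly is legitimate even though not literally exact.
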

\begin{proof}
Since $z \ll l$, we use
(\ref{0.1}) and the expansions of Bessel functions in this regime (see Appendix) to conclude
\begin{multline}
\label{1.3}
\frac{j_l (z)}{y_l(z)} = -\frac{1}{2e} 
\left ( \frac{ze}{2l} \right )^{2l+1} \left [ 1 + O \left(\frac{z^2}{l^2}\right) 
\right ] \\
=-\frac{1}{2e} \left( \frac{e Q^{1/2} (l+1)^{1/2}}{2l} \right)^{2l+1} 
\left [ 1 - \frac{Q}{(2l-1)} + O (Q^2/l^2) \right ]^{l+1/2}
\end{multline}
In the same way,
\begin{equation}
\label{1.3.1}
1+ \frac{Q B_3}{x B_1} = 2 + O
\left (\frac{1}{l}, \frac{Q}{l} \right ) ~~,~~  
1 + x \frac{A_2}{A_1} + \frac{Q A_3}{A_1} = 2 + O (Q/l) 
\end{equation}
Using (\ref{7.1}), one obtains (\ref{1.2}).
\end{proof} 
\begin{Remark}{\rm Since $Q^{1/2}/l^{1/2} $ scales like $l^{1/2} \epsilon \ll 1$,
we have $|y| \sim C l \left ( \epsilon l^{1/2} \right )^{2l+2} $, which is a decreasing function of $l$ in the
regime $1 \ll l \ll \epsilon^{-2}$. Therefore, $|y|$ is minimal at the right end, and this minimal value turns out to be larger than the values obtained in the next section.}
\end{Remark}

\section{Analysis of the case $l = O_s (\epsilon^{-2})$}\label{the-region}
From (\ref{1}), it follows that when $l=O_s (\epsilon^{-2} )$, $Q=O_s(\epsilon^{-2} )$.
If  $z=O_s(l)$, but $|z-l| \gg l^{1/3} $, from the
asymptotics  (\ref{14.5.6}) and (\ref{14.5.7}), it follows that
(\ref{z-eqn}) results in
\begin{equation}
\label{8.0}
z^2 \sim -i Q \sqrt{\frac{z^2}{l(l+1)} -1 }
\end{equation}
Hence  $|y|=|\Im z| \gg 1 $. In the
crossover regime, $1-\frac{z}{\sqrt{l(l+1)}} = O(l^{-2/3})$, the asymptotics of Bessel functions involve
the Airy functions  Ai and Bi  (\ref{16.5}) and (\ref{16.6})) whose ratio is $O_s(1)$,  and thus
$y$ cannot be exponentially small.

So, we restrict to the sub-region
$\Big | \frac{z}{\sqrt{l(l+1}} 
\Big | < 1$ and $1-\frac{z}{\sqrt{l(l+1)}} \gg l^{-2/3}$.  
In this regime, since $\frac{j_l (z)}{y_l (z)}$ is
exponentially small in $l$, by
(\ref{7.1}), any root $z$  must have 
$y$  exponentially small.  
We introduce the  scaled variables
\begin{equation}
\label{8}
\sqrt{l(l+1)}
= l_* \epsilon^{-2},~~ Q = l_* \epsilon^{-2} q ~~, 
~x = \frac{l_*}{\epsilon^2} \zeta ~~,~~~
-\epsilon^2 \log \left [ -\epsilon^2 y  \right ] = \eta 
\end{equation} 
From the definition of $Q$, it follows that (for $We =1$),
\begin{equation}
\label{8.0}
q=l_*- \frac{2 \epsilon^4}{l_*}
\end{equation}
From  (\ref{6.1}) and (\ref{7.1}),  we can replace
(\ref{7.1}) by
\begin{multline}
\label{8.0.0}
y = \frac{x B_1 + Q B_3 - y^2 B_2}{D}
~~{\rm where} ~~ \\
D = A_1 + x A_2 + Q A_4 + \sqrt{ \frac{l(l+1)}{x^2} - 1}
\left [ x A_1 + Q A_3 + y
\left ( x B_2 + B_1 + Q B_4 - y A_2 \right )
\right ]\ .
\end{multline}
Note: the square-bracketed quantity on the previous line vanishes by \eqref{7.1}.
With the notation in (\ref{8}),
(\ref{6.1}) and (\ref{8.0.0}) imply
\begin{equation}
\label{6.2}
0=\zeta + \frac{q A_3}{A_1} - 
\frac{e^{-\eta/\epsilon^2}}{\epsilon^2 A_1}   
\left [ \frac{B_1}{l_*\epsilon^2} +\zeta B_2 + q B_4
+ \frac{A_2}{\epsilon^4 l_*}   
e^{-\eta/\epsilon^2} \right ] =: F(\zeta, \eta; \epsilon)   
\end{equation}
\begin{equation}
\label{7.2}
0=\eta + \epsilon^2 \log \left [ -\epsilon^2 \frac{(x B_1 + Q B_3
- y^2 B_2)}{D} \right ]
=: G(\zeta, \eta; \epsilon) 
\end{equation}
\begin{Lemma}
\label{LemExist}
{\rm For any fixed $\delta \in (0, 1)$, 
if $l_* \in \left ( \frac{\delta^2}{\sqrt{1-\delta^2} } ,
\frac{(1-\delta)^2}{\sqrt{1-(1-\delta)^2}} \right )$, 
$\zeta\in (\delta,1-\delta)$ and $\eta>\eta_0=2l_*\int_{1-\delta}^1 t^{-1}\sqrt{1-t^2}dt$, then there exists
$\epsilon_0$ such that for $|\epsilon| \le \epsilon_0$,
$F$ and $G$ are smooth functions of $\zeta$, $\eta$, $l_*$ and $\epsilon$, with
\begin{equation}
\label{6.4}
F(\zeta, \eta; 0) = 
\zeta-l_* \frac{\sqrt{1-\zeta^2}}{\zeta} ~~~~~~~~,~~~~~~~ 
~~G(\zeta, \eta; 0) = \eta
-2 l_* \int_\zeta^1 \frac{\sqrt{1-t^2}}{t} dt  
\end{equation}
and 
${\rm det} 
\frac{\partial\{F, G\}}{\partial\{\zeta,\eta\}} \Big|_{\epsilon=0}\ne 0 $
Furthermore, in these intervals, 
the system of
equations $\{F(\zeta, \eta; \epsilon)=0,G(\zeta, \eta;\epsilon)=0 \}$
has a unique smooth solution $\left ( \zeta (l_*, \epsilon), \eta (l_*, \epsilon) \right )$, 
for sufficiently small $\epsilon$, with asymptotic behavior in $\epsilon$ 
determined implicitly from
\begin{equation}
\label{e2.0.zeta}
l_* = \frac{\zeta^2}{\sqrt{1-\zeta^2}} -\frac{\epsilon^2 (1-2 \zeta^2)}{2 (1-\zeta^2)^{3/2}} 
+ O(\epsilon^4), 
\end{equation}
\begin{multline}
\label{e2.0}
\eta = 2 l_* \int_\zeta^1 (t^{-2} - 1)^{1/2} dt
- \epsilon^2 \log \left [ l_* \zeta \right ]
\\
+\epsilon^2 \log \left [ 2 +
\frac{1}{2 (1-\zeta^2)}
- \frac{l_*^2 (1-2\zeta^2)}{2 \zeta^4} \right ]
+ O (\epsilon^4)
\end{multline}}
\end{Lemma}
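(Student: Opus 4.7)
The plan is to apply the Implicit Function Theorem to the pair $(F,G)$ at $\epsilon=0$, after first checking that both functions extend smoothly down to $\epsilon=0$ on the specified domain. All computations rely on the Debye expansions of $J_\nu(z)$ and $Y_\nu(z)$ in the regime $\nu=l+\tfrac{1}{2}\sim l_*\epsilon^{-2}$, $z=l_*\zeta\epsilon^{-2}$, so that $z/\nu\to\zeta<1$ stays strictly in the non-oscillatory Debye regime, away from the turning point. The hypothesis $\eta>\eta_0>0$ keeps $e^{-\eta/\epsilon^2}$ smaller than any power of $\epsilon$; hence the bracket in \eqref{6.2} containing that factor, the exponentially small $j_l/y_l$ contribution inside $D$, and the $-y^2B_2$ term in \eqref{8.0.0} all produce flat (infinitely differentiable) remainders. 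The leading parts of $A_1,A_3$ come from the Debye series for $Y_\nu, Y_\nu'$, which factor as a convergent power series in $1/\nu=O(\epsilon^2)$ times a common exponential $e^{\nu(\alpha-\tanh\alpha)}$; this exponential cancels in the ratio $A_3/A_1$, and the analogous growing exponential in $J_\nu/Y_\nu$ is converted by the $\epsilon^2\log(\cdot)$ in $G$ into a polynomial-in-$\zeta$ quantity.

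Next I would extract the $\epsilon=0$ limits. The leading Debye term gives $y_l'(z)/y_l(z)\to -\sqrt{1-\zeta^2}/\zeta$ and hence $A_3/A_1\to -\sqrt{1-\zeta^2}/\zeta$; with $q\to l_*$ and the exponentially small bracket in \eqref{6.2} vanishing, this produces $F(\zeta,\eta;0)=\zeta-l_*\sqrt{1-\zeta^2}/\zeta$. For $G$, the Debye exponentials give $j_l/y_l\sim-\tfrac{1}{2}e^{-2\nu(\alpha-\tanh\alpha)}$ with $\mathrm{sech}\,\alpha=\zeta$, so
\[
\epsilon^2\log\bigl[-\epsilon^2(xB_1+QB_3-y^2B_2)/D\bigr]\ \longrightarrow\ -2l_*(\alpha-\tanh\alpha).
\]
The elementary identity $\alpha-\tanh\alpha=\int_\zeta^1 t^{-1}\sqrt{1-t^2}\,dt$ (via the substitution $t=\mathrm{sech}\,\beta$) then yields $G(\zeta,\eta;0)=\eta-2l_*\int_\zeta^1 t^{-1}\sqrt{1-t^2}\,dt$.

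At $\epsilon=0$ the Jacobian at the limiting solution is triangular: $\partial_\eta F|_{\epsilon=0}=0$ since $F(\zeta,\eta;0)$ is $\eta$-independent, while a direct differentiation gives $\partial_\zeta F|_{\epsilon=0}=1+l_*/(\zeta^2\sqrt{1-\zeta^2})$ and $\partial_\eta G|_{\epsilon=0}=1$. The determinant is therefore $1+l_*/(\zeta^2\sqrt{1-\zeta^2})>0$, and the Implicit Function Theorem produces the unique smooth branch $(\zeta(l_*,\epsilon),\eta(l_*,\epsilon))$. The refined expansions \eqref{e2.0.zeta}--\eqref{e2.0} follow by substituting this branch back into $F=G=0$ and using the $O(\epsilon^2)$ corrections in the Debye series, including the next Debye coefficient $u_1(t)=(3t-5t^3)/24$, together with the conversion $\nu=l_*\epsilon^{-2}+\epsilon^2/(8l_*)+O(\epsilon^6)$ coming from $\sqrt{l(l+1)}=l_*/\epsilon^2$.

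The main technical obstacle lies in this last perturbative step: tracking the next-order Debye coefficients for all four of $Y_\nu,Y_\nu',J_\nu,J_\nu'$ through the ratio $A_3/A_1$ in $F$ and, especially, through the logarithm in $G$ carefully enough to recover the precise algebraic coefficients $-(1-2\zeta^2)/[2(1-\zeta^2)^{3/2}]$ in \eqref{e2.0.zeta} and the combination $2+1/[2(1-\zeta^2)]-l_*^2(1-2\zeta^2)/(2\zeta^4)$ inside the logarithm of \eqref{e2.0}. A secondary subtlety is confirming that the various exponentially small ``correction'' pieces (in particular the $e^{-2\eta/\epsilon^2}$ term in the bracket of \eqref{6.2}) truly contribute $o(\epsilon^k)$ for every $k$ and thus do not interfere with the polynomial-in-$\epsilon^2$ expansion being extracted.
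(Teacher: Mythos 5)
Your proposal is correct and follows essentially the same route as the paper: establish smoothness of $F,G$ down to $\epsilon=0$ from Bessel-function asymptotics in the non-oscillatory regime $z/\nu<1$, show the exponentially small terms (controlled by $\eta>\eta_0$) are flat in $\epsilon$, extract the $\epsilon=0$ limits from the leading Debye ratio $y_l'/y_l\to-\sqrt{1-\zeta^2}/\zeta$ and the Debye exponent for $j_l/y_l$, verify a nonvanishing Jacobian, and invoke the implicit function theorem. The paper derives the asymptotics from the integral representations of $j_l,y_l$ by the saddle-point method rather than quoting the Debye series directly, but those are the same expansions. One small upgrade in your version is that you make the Jacobian computation explicit (it is triangular at $\epsilon=0$, determinant $1+l_*/(\zeta^2\sqrt{1-\zeta^2})>0$), whereas the paper simply asserts the determinant is nonzero. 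The place where you stop short is the refined expansions \eqref{e2.0.zeta}--\eqref{e2.0}: you identify this as the main technical burden but do not carry out the $O(\epsilon^2)$ bookkeeping of the next-order Debye coefficients, which the paper does in full in \eqref{eqzetaq}--\eqref{m1.1}. That step is routine but nontrivial; a complete proof needs it.
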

\begin{proof} 
Using a standard integral representation of $j_l (\sqrt{l(l+1)} \zeta)$ 
and $y_l (\sqrt{l(l+1)} \zeta)$, (\ref{14.5.1}) and (\ref{14.5.2}),  and noting
that $\sqrt{l(l+1)} = l_* \epsilon^{-2}$, smoothness of
$F$ and
$G$ in $\epsilon$ follows provided
 $A_1$ and $D$ 
in (\ref{6.2}) and (\ref{7.2})
are nonzero,
which is the case, as shall be seen, for $\epsilon=0$.
Routine calculations, using the asymptotics 
(\ref{15})-(\ref{16.4}) 
show that
\begin{equation}
\label{eqzetaq}
\zeta+q \frac{A_3}{A_1} =
\zeta+q \frac{y_l^\prime(x)}{y_l(x)} + O (e^{-\eta_0 \epsilon^{-2}} )
= \zeta-\frac{l_* \sqrt{1-\zeta^2}}{\zeta} \left [
1+ \epsilon^2 \frac{1-2 \zeta^2}{2 l_* (1-\zeta^2)^{3/2}}
+ O (\epsilon^4) \right ],
\end{equation}
Therefore,
\begin{equation}
\label{eqFeps}
F(\zeta, \eta; \epsilon) = \zeta - l_* \frac{\sqrt{1-\zeta^2}}{\zeta}
-\frac{\epsilon^2 (1-2\zeta^2)}{2 \zeta (1-\zeta^2)}
+ O (\epsilon^4), 
\end{equation}
from which the formula for $F(\zeta, \eta; 0)$ in (\ref{6.4}) 
follows. Now, setting $F=0$ in (\ref{eqFeps}) immediately leads to 
(\ref{e2.0.zeta}).
For the second part, from the asymptotics of $j_l$, $y_l$ in this regime, (\ref{15})-(\ref{16.4}),
we get
\begin{multline}
x B_1 + Q B_3 -y^2 B_2 = x j_l (x) \left [ 1
+ \frac{q j_l^\prime (x)}{\zeta j_l (x)} + O(y^2) \right ]
\\
= x j_l (x)
\left [ 1 + \frac{l_* \sqrt{ 1-\zeta^2}}{\zeta^2}
- \frac{\epsilon^2 (1-2 \zeta^2)}{2 \zeta^2 (1-\zeta^2)} + O (\epsilon^4)
\right ]
\end{multline}
\begin{multline}
D =
A_1 + x A_2 + Q A_4 +
\sqrt{\frac{l (l+1)}{x^2} -1 } \left [ x A_1 + Q A_3 +
y \left ( x B_2 + B_1 + Q B_4 - y A_2 \right ) \right ] \\
= y_l \left \{ \frac{1}{2 (1-\zeta^2)} + \frac{2 l_*}{\zeta^2}
\sqrt{1-\zeta^2} -
\frac{l_* (1-2\zeta^2)}{2 \zeta^2 \sqrt{1-\zeta^2}} + O(\epsilon^2) \right \}
\end{multline}
Therefore, it follows that
\begin{multline}\label{m1}
G(\zeta, \eta; \epsilon) = \eta +\epsilon^{2} \log \left \{ -\epsilon^2
\frac{x B_1 + Q B_3 -y^2 B_3}{D} \right \}
= \eta+\epsilon^2 \log \left [-\frac{l_* \zeta j_l (x)}{y_l (x)} \right ] \\
+ \epsilon^2 \log \left [ 1 + \frac{l_* \sqrt{1-\zeta^2}}{\zeta^2} \right ]
-\epsilon^2 \log \left [
\frac{1}{2 (1-\zeta^2)} + \frac{2l_*}{\zeta^2} \sqrt{1-\zeta^2}
- \frac{l_* (1-2\zeta^2)}{2 \zeta^2 \sqrt{1-\zeta^2} } \right ]
+ O (\epsilon^4)
\end{multline}
From equation (\ref{16.4}) in the Appendix, we obtain
\begin{equation}\label{e2}
-2 \frac{j_l (x)}{y_l (x)} = \exp \left [ -2 l_* \epsilon^{-2} \int_\zeta^1
(t^{-2} - 1)^{1/2} dt \right ] ,
\end{equation}
\begin{multline}\label{m1.1}
G(\zeta, \eta; \epsilon) = \eta -2 l_* \int_\zeta^1 (t^{-2} - 1)^{1/2} dt
+\epsilon^{2} \log \left ( \frac{1}{2} l_* \zeta
\left [ 1 + l_* \frac{\sqrt{1-\zeta^2}}{\zeta^2} \right ] \right )
\\
-\epsilon^2 \log \left [
\frac{1}{2 (1-\zeta^2)} + \frac{2l_*}{\zeta^2} \sqrt{1-\zeta^2}
- \frac{l_*^2 (1-2\zeta^2)}{2 \zeta^4}
\left ( \frac{\zeta^2}{l_* \sqrt{1-\zeta^2} } \right )
\right ]
+ O (\epsilon^4)
\end{multline}
from which the expression of $G(\zeta, \eta; 0)$ in (\ref{6.4}) follows.
The Jacobian of $F(\zeta, \eta; 0)$, $G(\zeta, \eta; 0)$ 
with respect to $(\zeta, \eta)$ is clearly nonzero. The
statement of existence and uniqueness of $\left ( \zeta (l_*, \epsilon), \eta (l_*, \epsilon) \right )$ 
now follows from the implicit function theorem.
Furthermore, using (\ref{m1.1}) and (\ref{e2.0.zeta}) in $F=0$, $G=0$ immediately implies 
(\ref{e2.0}).
\end{proof}

\begin{Note}{\rm We now comment on why the interval restrictions in Lemma \ref{LemExist}
for $l_*$, $\zeta$ and $\eta$ do not matter for
finding the smallest $|y|$ as a function of $l_*$.
First, if 
$l_* < \frac{\delta^2}{\sqrt{1-\delta^2}}$, then
for sufficiently small $\delta$, 
the conclusions of Lemma \ref{lem4} hold and the corresponding
$|y|$ is not as small as implied by Lemma \ref{LemExist} for $l_*$ in the given interval.
On the other hand if $l_* > 
\frac{(1-\delta)^2}{\sqrt{1-(1-\delta)^2}} $ for sufficiently small $\delta$,
then the conclusions of Lemma \ref{lem2} hold where $y = \Im ~z$ is no longer small.  
Thus, the restriction 
$l_* \in \left (\frac{\delta^2}{\sqrt{1-\delta^2}} ~,~ 
\frac{(1-\delta)^2}{\sqrt{1-(1-\delta)^2}} \right ) $ is appropriate.
Furthermore, for $l_*$ in this
interval, there is no need to consider the possibility $\zeta \notin \left (\delta, 1-\delta \right )$ 
since we have already shown that $l=O_s \left ( \epsilon^{-2} \right )$, implies
$x = O_s \left ( \epsilon^{-2} \right )$ and argued at the outset that we only
need to consider $1-\zeta \gg l^{-2/3}$. When $(1-\zeta)$ is small,
the asymptotic behavior
of $\frac{j_l (x)}{y_l (x)}$ is given by a ratio of Airy functions, as is the case for
$1-\zeta = O(l^{-2/3})$ (see (\ref{16.5}) and (\ref{16.6})) and the
resulting $|y|$ is not as small as obtained in \eqref{eqym} below, as a consequence of  Lemma \ref{LemExist}.
Also, we need not worry about
possible solutions for which $\eta < \eta_0$, since we are seeking
to maximize $\eta$ (minimizing $|y|$).}
\end{Note}

\subsection{Maximizing $\eta$ as a function of $l_*$ and determination of $\Gamma(\epsilon)$ of \eqref{Gamma-def}}

Since $\Im ~z =: y = -\epsilon^{-2} \exp \left [ -\frac{\eta}{\epsilon^2} \right ]$, 
minimizing $|y|$ as a function of $l_*$ corresponds to maximizing $\eta$.
Now, 
(\ref{e2.0.zeta}) implies that
$l_*$ increases 
with $\zeta $ in any compact subset of $(\delta, 1-\delta)$ for all sufficiently small $\epsilon$.
For now, we consider
$l_*$ as a continuous variable for the purposes of
finding the maximum value of $\eta$. We will show later that 
the maximal value of $\eta$ to the order calculated is the same
if  $l_* $ takes  discrete values: $\sqrt{l (l+1)} \epsilon^2$ for
$l \in \mathbb{N}$.

We seek a critical value 
$l_*$ for which
$\partial_{l_*} \eta = 0$ and
$\partial_{l_* l_*}^2 \eta < 0$ implying a maximum of $\eta$.
We note that
\begin{multline}
\label{14.6}
0=\partial_{l_*} \eta=
2 \int_\zeta^1
(t^{-2} - 1)^{1/2} dt
-\frac{2 l_*}{\zeta} \zeta_{l_*} (1-\zeta^{2})^{1/2}
- \frac{\epsilon^2}{l_*} - \frac{\epsilon^2 \zeta_{l_*}}{\zeta}
\\
+ \epsilon^2
\zeta_{l_*} \partial_\zeta \log \left [ 2 +
\frac{1}{2 (1-\zeta^2)}
- \frac{l_*^2 (1-2\zeta^2)}{2 \zeta^4} \right ]
+ \epsilon^2
\partial_{l_*} \log \left [ 2 +
\frac{1}{2 (1-\zeta^2)}
- \frac{l_*^2 (1-2\zeta^2)}{2 \zeta^4} \right ]
+ O (\epsilon^4)
\end{multline}
We also note that (\ref{e2.0.zeta}) implies
\begin{equation}
\label{eqzetal}
\zeta_{l_*} = \frac{(1-\zeta^2)^{3/2}}{\zeta
(2-\zeta^2)} -
\frac{(1+2 \zeta^2)\sqrt{1-\zeta^2}}{ 2 \zeta ( 2-\zeta^2)^2
} \epsilon^2 + O (\epsilon^4)
\end{equation}
Substituting (\ref{e2.0.zeta}) and (\ref{eqzetal}) into  
(\ref{14.6}),
we obtain
\begin{equation}
\label{14.9.0}
0 = \frac{1}{2} \eta_{l_*} = \int_{\zeta}^1 \sqrt{t^{-2} - 1} ~dt -
\frac{(1-\zeta^2)^{3/2}}{(2-\zeta^2)}
+ \epsilon^2
\frac{\sqrt{1-\zeta^2} (2 \zeta^4+5 \zeta^2 - 4)}{
2 \zeta^2 (2-\zeta^2)^2}
+ O(\epsilon^4)
=: g(\zeta; \epsilon)
\end{equation}
It is clear that the solution $\zeta$
of $g(\zeta; \epsilon)=0$ has the behavior
\begin{equation}
\label{14.9.0.1}
\zeta = \zeta_{m,0} + \epsilon^2 \zeta_{m,2} + O(\epsilon^4)
\end{equation}
where
\begin{equation}
\label{eqzeta0}
0 = \int_{\zeta_{m,0}}^1 \sqrt{t^{-2} - 1} dt -
\frac{(1-\zeta_{m,0}^2)^{3/2}}{(2-\zeta_{m,0}^2)}
= g (\zeta_{m,0}; 0) =: g_0 (\zeta_{m,0}).
\end{equation}
Since
\begin{equation}
\label{eqg0p}
g_0^\prime (\zeta)
= 2 \sqrt{1-\zeta^2} \left (\frac{2-(2-\zeta^2)^2}{\zeta
(2-\zeta^2)^2 } \right ),
\end{equation}
it follows that 
\begin{equation}
\label{eqzeta2}
\zeta_{m,2} =
-\frac{\sqrt{1-\zeta_{m,0}^2} (2 \zeta_{m,0}^4+5 \zeta_{m,0}^2 - 4)}{
2 g_0^\prime (\zeta_{m,0}) \zeta_{m,0}^2 (2-\zeta_{m,0}^2)^2}
=-\left \{ \frac{(2 \zeta_{m,0}^4+5 \zeta_{m,0}^2 - 4)}{
4 \zeta_{m,0}
\left (2 - (2-\zeta_{m,0}^2)^2 \right )}
\right \}
\end{equation}
We now seek to find $\zeta_{m,0}$ which corresponds to the maximal $\eta$.
We note from (\ref{eqg0p}) that 
$g_0^\prime < 0$ for
$\zeta \in \left (0, \sqrt{2-\sqrt{2}} \right ) =: J$
and $g_0^\prime > 0$  for $\zeta \in
\left ( \sqrt{2-\sqrt{2}}, 1 \right ) $.
Since we are seeking a maximum for $\eta$, we must have $g_0^\prime < 0 $ at the corresponding $\zeta$ (noting that
$\zeta_{l_*} > 0$). 
Thus the roots of 
$g_0 (\zeta)=0$ only need to be sought for $\zeta \in J$.
At the right end of the interval $J$,
explicit integration --in terms of elementary functions-- gives
$g_0 < 0$ ($\approx -0.0678$), while
$g_0 \rightarrow +\infty$ when $\zeta \rightarrow 0$. Since
$g_0$ is monotonic in $J$, there exists unique $\zeta_{m,0}$ satisfying
$g_0 (\zeta_{m,0}) = 0$.
The explicit calculation of $g_0$ gives
$g_0 (0.58) > 0$ and $g_0 (0.59) < 0 $, implying
that the unique maximum $\zeta_{m,0}$ is in $(0.58, 0.59)$--in fact
$\zeta_{m,0} = 0.58134..$. Using (\ref{eqzeta2}) we get  $\zeta_{m,2} \approx -1.1743..$.
Equation (\ref{e2.0.zeta})
also implies that the critical $l_*$ ($=l_{*,m}$) that maximizes $\eta$ is given by
\begin{multline}
l_{*,m}=\frac{\zeta_{m,0}^2}{\sqrt{1-\zeta{m,0}^2}}
+ \epsilon^2 \left [
\frac{\zeta_{m,1} \zeta_{m,0} (2-\zeta_{m,0}^2)}{
(1-\zeta_{m,0}^2)^{3/2} }
- \frac{(1-2 \zeta_{m,0}^2)}{2 (1-\zeta_{m,0}^2)^{3/2}}
\right ] + O(\epsilon^4) \\
=: l_{m,0} + \epsilon^2 l_{m,2} + O(\epsilon^4)
\approx 0.41535 - 2.4071 \epsilon^2
+ O(\epsilon^4) 
\end{multline}
Therefore, it follows that the maximized $\eta = \eta_m$ satisfies
\begin{multline}
\label{eqetam}
\eta_m = 2 l_{m,0} \int_{\zeta_{m,0}}^1 (t^{-2} - 1)^{1/2} dt
+ \epsilon^2 \left \{ 2 l_{m,2}
\int_{\zeta_{m,0}}^1 (t^{-2} - 1)^{1/2} dt
- 2 l_{m,0} \zeta_{m,2} (\zeta_{m,0}^{-2} - 1)^{1/2} \right. \\
\left. - \log [l_{m,0} \zeta_{m,0}] 
+ \log \left [ 2 +
\frac{1}{2 (1-\zeta_{m,0}^2 )}-
\frac{l_{m,0}^2 (1-2 \zeta_{m,0}^2 )}{2\zeta_{m,0}^4} \right ]
\right \} =:\eta_{m,0}+\epsilon^2 \eta_{m,2} + O (\epsilon^4) \\
\approx 0.26924 + 2.1465 \epsilon^2 + O(\epsilon^4)\ .
\end{multline}
Therefore the minimum value of $|y|$ is
\begin{multline}
\label{eqym}
|y_m| = \epsilon^{-2}
\exp \left [ - \epsilon^{-2} \eta_{m,0} - \eta_{m,2} \right ]
\left [ 1 + O(\epsilon^2 \right ]
\\\approx -\epsilon^{-2} \exp
\left [ -0.26924 \epsilon^{-2} - 2.1465 \right ]
\left [ 1 + O(\epsilon^2 \right ]
\end{multline}
The proof of Theorem \ref{main-thm} now follows from Remark \ref{RemZ}.
\begin{Remark}{\rm
The calculation (\ref{eqetam}) assumed $l_*$ to be a continuous variable rather
than  discrete,  $l_* = \sqrt{ l (l+1)} \epsilon^2$ with $ l \in \mathbb{N}$.
This however makes
no difference to the leading order result in (\ref{eqym}). Maximization of $\eta$ over
$l \in \mathbb{N}$ results in an optimal $l_{*}$ that is different from the 
computed $l_{*,m}$ by $O(\epsilon^2)$. Since $\eta$ is a smooth function of
$l_*$, for $l_* - l_{*,m} = O(\epsilon^2)$,
$\eta (l_*) - \eta (l_{*,m}) = O \left (l_*-l_{*,m} \right )^2  = O (\epsilon^4)$ and therefore
discreteness does not affect 
the leading order asymptotic result (\ref{eqym}).}  
\end{Remark}

\section{Appendix: Asymptotics of $j_l$ and $y_l$ in different regimes}\label{asymptotics}

The results quoted below are standard and either given in standard references 
such as \cite{Abramowitz}, 
\cite{Olver} or follow directly from them. 

The spherical Bessel functions $j_l(z)=\sqrt{\frac{\pi}{2z}} J_{l+1/2} (z)$ 
and $y_l (z) = \sqrt{\frac{\pi}{2z}} Y_{l+1/2} (z) $ 
have the following integral representations
which immediately follows from equations (10.9.6),
(10.9.7) in \cite{Abramowitz} (see also http://dlmf.nist.gov/10.9) 
using $\nu = l+1/2$
\begin{multline}
\label{14.5.1}
j_l (z) = \sqrt{\frac{1}{2 \pi z}} 
\Big \{  \int_0^{\pi} \cos \left [ (l+\frac{1}{2} ) \tau
- z \sin \tau \right ] d \tau  \\+ (-1)^{l+1} 
\int_0^\infty \exp \left [ - z \sinh t - (l+\frac{1}{2} ) t \right ] dt 
\Big \}
\end{multline}
\begin{multline}
\label{14.5.2}
y_l (z) = \sqrt{\frac{1}{2 \pi z}} 
\Big[ (-1)^{l+1}
\int_0^\pi \cos \left [ (l+\frac{1}{2} ) \tau + z \sin \tau \right ] d\tau 
\\-
\int_0^\infty \exp \left [ - z \sinh t + (l+\frac{1}{2} ) t \right ] dt 
\Big]
\end{multline}
which results in the asymptotic representations 
below by
standard Laplace method for asymptotics of integrals \cite{Olver}. 
The Bessel functions $j_l (z)$, $y_l (z)$ and the 
Hankel
function $h_l^{(1)} (z) = j_l (z) + i y_l (z)$ satisfy
\begin{equation}
\label{14.5}
u^{\prime \prime} + \frac{2}{z} u^\prime + \left ( 1 -
\frac{l(l+1)}{z^2} \right ) u = 0 
\end{equation}

\subsection{The regime $z \ll l$:}\label{7.1}
We only present the asymptotic regimes relevant to this analysis.
For $z \ll l$ we have (\
see equations (10.1.2) and (10.1.3), page 437 of \cite{Abramowitz}\ )
\begin{equation}
\label{A.1} 
j_l (z) = \frac{z^{l} 2^l l!}{(2l+1)!} 
\left [ 1 + O \left (\frac{z^2}{(l+1)^2}
\right )
\right ]  
\end{equation}
\begin{equation}
\label{A.2} 
y_l (z) = -\frac{(2l-1)!}{2^{l-1} (l-1)! z^{l+1}}        
\left [ 1 +
\frac{z^2}{2(2l-1)} + \frac{z^4}{8 (2l-1) (2l-3)} 
+ O \left (\frac{z^6}{(l+1)^3} \right ) 
\right ] 
\end{equation}
(Three orders are indeed needed because of cancellations.)
For $l\gg z \gg 1$, using 
$\Gamma (x+1) \sim \sqrt{2\pi} e^{-x} x^{x+1/2}$
for large $x$ we note the simplification:
\begin{equation}
\label{A.3} 
j_l (z) \sim  
\frac{2^{-3/2}}{l}
\left ( \frac{z e}{2 l} \right )^{l}  
\left [ 1 + O \left (\frac{z^2}{l^2} \right ) \right ] 
\end{equation}
\begin{equation}
\label{A.4} 
y_l (z) \sim -\frac{e}{l\sqrt{2}} 
\left ( \frac{z e}{2 l} \right )^{-l-1}  
\left [ 1 + \frac{z^2}{2 (2l-1)} 
+\frac{z^2}{2(2l-1)} + \frac{z^4}{8(2l-1) (2l-3)} 
+ O \left (\frac{z^6}{l^3} \right ) \right ] 
\end{equation}
Since the asymptotics is differentiable \cite{Wasow}, 
this implies
\begin{equation}
\label{A.5}
\frac{j_l^{\prime }(z)}{j_l (z)} 
= \frac{l}{z} \left [ 1 
+O \left (\frac{z^2}{l^2} \right ) \right ]
\end{equation}
\begin{equation}
\label{A.6}
\frac{z y_l^{\prime }(z)}{y_l (z)} 
= - (l+1) + \frac{z^2}{2l-1} 
+ \frac{z^4}{(2l-3)(2l-1)^2} 
+ O \left(\frac{z^6}{l^5}\right) 
\end{equation}
\begin{equation}
\label{A.4} 
\frac{y_l^{\prime \prime} (z)}{y_l (z)}
= (l+1) (l+2) z^{-2} - \frac{2l+1}{2l-1} 
-\frac{2z^2}{(2l-3)(2l-1)^2}  
+ O \left ( \frac{z^4}{l^4} \right ) 
\end{equation} 
\begin{equation}
\label{A.7}
\frac{j_l (z)}{y_l(z)} = -\frac{1}{2e} 
\left ( \frac{ze}{2l} \right )^{2l+1} \left [ 1 + O (\frac{z^2}{l^2}) 
\right ]
\end{equation} 

\subsection{The regime $z \gg l$,  $\arg z \in (-\pi, \pi)$} \label{7.2}
In this case
\begin{equation}
\label{14.5.1.1}
h_l^{(1)} (z)  \sim \frac{1}{z} \exp 
\left [ i \left (z - \frac{(l+1)}{2} \pi \right ) \right ]  
\end{equation}
\begin{equation}
\label{14.5.2.1}
j_l (z)  \sim \frac{1}{z} 
\cos \left (z - \frac{(l+1)}{2} \pi \right ) 
\end{equation}
\begin{equation}
\label{14.5.3}
y_l (z)  \sim \frac{1}{z} 
\sin \left (z - \frac{(l+1)}{2} \pi \right ) 
\end{equation}

\subsection{The regime $l \gg 1$, $\xi = \frac{z}{\sqrt{l (l+1)}} =O_s(l)$, 
$1-|\xi| > l^{-2/3}$, $|y| \ll l^{1/3}$, 
$\arg (1-\xi) \in (-\pi/3, \pi/3) $}  \label{7.3}
From (\ref{14.5.1}) and (\ref{14.5.2}), by the
saddle point method 
(\cite{Olver}),
we obtain
\begin{multline}
\label{15}
j_l (z) = \frac{1}{2 \sqrt{l (l+1)} \xi} 
\left ( \xi^{-2}  - 1 \right )^{-1/4}   \\ 
\times \exp \left [ - l^{1/2}(l+1)^{1/2} \int_{\xi}^1 
\left( t^{-2} - 1 \right )^{1/2} dt \right ]  
\left [1 + \frac{u(\xi)}{l} 
+ O(l^{-2} )\right ]   
\end{multline}
where
\begin{equation}
\label{16} 
u(\xi) = -\frac{5}{24 (1-\xi^2)^{3/2}} + \frac{1}{8 (1-\xi^2)^{1/2}}
+ \frac{1}{16} \log \left (
\frac{1-\sqrt{1-\xi^2}}{1+\sqrt{1-\xi^2}} 
\right )
\end{equation}
\begin{multline}
\label{16.1}
y_l (z) \sim -\frac{1}{\sqrt{l(l+1)} \xi} 
\left ( \xi^{-2} - 1 \right )^{-1/4}  \\
\times    
\exp \left [ l^{1/2}(l+1)^{1/2} \int_{\xi}^1 
\left[ t^{-2} - 1 \right ]^{1/2} dt \right ]   
\left [1 - \frac{u(\xi)}{l} 
+ O(l^{-2}) \right ]   
\end{multline}
\begin{equation}
\label{16.2}
\frac{y_l^\prime (z)}{y_l (z)} = 
-\sqrt{\xi^{-2} -1} 
- 
\frac{1-2 \xi^2}{2 l \xi (1-\xi^2)}   + O(l^{-2} )  
\end{equation}
\begin{equation}
\label{16.3}
\frac{j_l^\prime (z)}{j_l (z)} = 
\sqrt{\xi^{-2} -1} - 
\frac{1-2 \xi^2}{2 l \xi (1-\xi^2)}   + O(l^{-2} )  
\end{equation}
In this regime, we have
\begin{equation}
\label{16.4} 
-\frac{2 j_l (z)}{y_l (z)} = 
\exp \left [ - 2\sqrt{l(l+1)} \int_\xi^1
(t^{-2} - 1)^{1/2} dt \right ] \left [1 
+ \frac{2}{l} u(\xi) + O (l^{-2} ) \right ]  
\end{equation}

\subsection{The regime $l \gg 1$, $\xi = \frac{z}{\sqrt{l (l+1)}} =O_s(l)$,
$|\xi| - 1 \gg l^{-2/3}$, $|y| \ll 1$,  
$\arg (\xi-1) \in (-\pi/3, \pi/3) $} \label{7.4} 
In this case  we have
\begin{equation}
\label{14.5.6} 
j_l (z) \sim \frac{1}{\sqrt{l (l+1)} \xi} (1 -\xi^{-2} )^{-1/4}  
\cos \left [ \sqrt{l(l+1)} \int_1^\xi \sqrt{(1-t^{-2} )} 
dt - \frac{(l+1) \pi}{2} \right ] 
\end{equation} 
\begin{equation}
\label{14.5.7} 
y_l (z) \sim \frac{1}{\sqrt{l(l+1)} \xi} (1 -\xi^{-2} )^{-1/4}  
\sin \left [ \sqrt{l(l+1)} \int_1^\xi \sqrt{(1-t^{-2} )} dt 
- \frac{(l+1) \pi}{2} 
\right ] 
\end{equation} 

\subsection{ The regime  $ l \gg 1$, $\xi=\sqrt{z}{\sqrt{l(l+1)}} = O_s (l)$,
$1-\xi = O(l^{-2/3})$}\label{7.5}
Here   the asymptotic behavior is  given by
\begin{equation}
\label{16.5}
j_l (z) \sim \frac{\sqrt{\pi} l^{1/6}}{2^{1/6} z} {\rm Ai} \left (l^{2/3} 2^{1/3} 
[1-\xi] \right )    
\end{equation}
\begin{equation}
\label{16.6}
y_l (z) \sim -\frac{\sqrt{\pi} l^{1/6}}{2^{1/6} z} {\rm Bi} \left (l^{2/3} 2^{1/3} 
[1-\xi] \right )    
\end{equation}
\bigskip\bigskip


\begin{thebibliography}{99}
\bibitem{Abramowitz} M. Abramowitz \& I. Stegun, Handbook of Mathematical Functions, Dover, New York (1970).

\bibitem{Olver} F.W.J. Olver, Asymptotics and Special Functions, A.K. Peters, Wellesley, Massachusetts (1997).  

\bibitem{Brenner:95} M.P. Brenner, D. Lohse and T.F. Dupont, Bubble Shape Oscillations and the Onset of Sonoluminescence, Phys. Rev. Lett. Volume 75, \#5 954--957 (1995)

\bibitem{Leighton:04} T.G. Leighton, From seas to surgeries, from babbling brooks to baby scans: the acoustics of gas bubbles in liquids, International Journal of Modern Physics B Volume 18 \#25 3267--3314 (2004)

  \bibitem{SW:11} A.M. Shapiro and M.I. Weinstein, Radiative decay of bubble oscillations in a compressible fluid, SIAM J. Math. Analysis Volume 43, 828-876 (2011)
  
\bibitem{Wasow} W. Wasow,
Asymptotic Expansions for Ordinary Differential Equations,
Interscience, New York (1965).

\end{thebibliography}
\end{document}